\numberwithin{equation}{section}
\theoremstyle{plain}
\newtheorem{theorem}{Theorem}[section]
\newtheorem{lemma}[theorem]{Lemma}
\newtheorem{proposition}[theorem]{Proposition}
\newtheorem{conjecture}[theorem]{Conjecture}
\theoremstyle{definition}
\newtheorem{definition}[theorem]{Definition}
\theoremstyle{remark}
\newtheorem{remark}[theorem]{Remark}
\newtheorem{case[theorem]}{Case}
\newtheorem*{notation}{\textbf{Notation}}
\title[\parbox{14cm}{\centering{Restriction estimates for the flat disks over finite fields \hspace{1in}}} \quad]{Restriction estimates for the flat disks over finite fields }
\author{Doowon Koh}
\address{Department of Mathematics\\
Chungbuk National University \\
Cheongju, Chungbuk 28644 Korea}
\email{koh131@chungbuk.ac.kr}
\thanks{Key words and phrases: Finite field, Restriction problem,  Flat disk\\
Doowon Koh would like to thank the Department of Mathematics at the University of Rochester for hospitality during a visit where this work was completed. This work was supported by a funding for the academic research program of Chungbuk National University in 2022, and   Basic Science Research Programs through National Research Foundation of Korea (NRF) funded by the Ministry of Education (NRF-2018R1D1A1B07044469).} 
\subjclass[2010]{42B05, 43A32, 43A15 }
\begin{document} 

\begin{abstract}  In this paper we study the restriction estimate for the flat disk over finite fields.
 Mockenhaupt and Tao initially studied this problem but  their results were addressed only for dimensions $n=4, 6.$  We improve and extend their results  to all dimensions $n\ge 6.$  More precisely, we obtain the sharp $L^2\to L^r$ estimates, which cannot be proven by applying the usual Stein-Tomas argument  over a finite field even with the optimal Fourier decay estimate on the flat disk. One of main ingredients is to discover and analyze an explicit form of the  Fourier transform of the surface measure on the flat disk. 
In addition, based on the recent results on the restriction estimates for the paraboloids,  we  address improved restriction  estimates for the flat disk beyond the $L^2$ restriction  estimates.

\end{abstract}

\maketitle
\section{Introduction} 
Let $\mathbb F_q^n$ be an $n$-dimensional vector space over a finite field $\mathbb F_q$ with $q$ elements, where $q$ is odd.
Let $n=2d$ for an integer $d\ge 2.$
In this paper we investigate the restriction estimate for the following variety
\begin{equation}\label{DefFlat}\mathcal{F}:=\{(\alpha,  \alpha\cdot \alpha,  \beta,  \alpha\cdot \beta) :  \alpha, \beta\in \mathbb F_q^{d-1}\} \subset \mathbb F_q^{2d},\end{equation}
where $\alpha \cdot \beta$ is the usual inner product of $\alpha$ and $\beta$.  This variety $\mathcal{F}$ is referred to as the flat disk over a finite field.\\

In 2004, Mockenhaupt and Tao \cite{MT04} initially studied the finite field analogue of the restriction problem for various algebraic varieties including the flat disk over finite fields.  Since their work was introduced,  follow-up studies have been extensively conducted, but most of them have focused on hyper-surfaces such as the paraboloid,  the sphere, and the cone  (see, for instance,  \cite{IK09,  LL10, LL13, IKL20, SZ17, RS18, IK10, KPV18, KLP22}).  On the other hand,  there are only few known concrete results on the restriction problem for the flat disk over the finite field.  For example,  Mockenhaupt and Tao \cite{MT04}  addressed some partial results in the specific case when $n=6,$ and settled it when $n=4.$ \\

The purpose of this paper is to present a systematic study on restriction theory related to the flat disk, and   improved  results in all dimensions $n\ge 6.$  To this end, we begin by recalling notation regarding the restriction problem for the flat disk.
We endow $\mathbb F_q^n$ with counting measure  ``$dm$''.
Let $d\sigma$ be normalized ``surface measure'' on the flat disk $\mathcal{F}$ living in the dual space of $\mathbb F_q^n:$
$$ \int_{\mathcal{F}} f(x) d\sigma(x):= \frac{1}{|\mathcal{F}|} \sum_{x\in \mathcal{F}} f(x).$$
Here, we note that $|\mathcal{F}|=q^{n-2}$, which denotes the cardinality of $\mathcal{F}.$ The normalized surface measure $d\sigma$ can  be interpreted as the following:
$$ d\sigma(x)= \frac{q^n}{|\mathcal{F}|} 1_\mathcal{F}(x) dx=q^2 1_\mathcal{F}(x) dx,$$
where $1_{\mathcal{F}}$ denotes the indicator function of $\mathcal{F}$ and   we endow the dual space of $\mathbb F_q^d$ with
 normalized counting measure ``$dx$''.  Hence, we can identify $d\sigma$ as a function $q^2 1_\mathcal{F}$ on the dual space of $\mathbb F_q^n.$
  \\

For $1\le p, r\le \infty,$ we define $R^*_{\mathcal{F}}(p\to r)$ to be the smallest constant such that 
the extension estimate 
$$ || (fd\sigma)^\vee||_{L^r(\mathbb F_q^n, dm)}\le R^*_{\mathcal{F}}(p\to r) ||f||_{L^p(\mathcal{F}, d\sigma)}$$
holds true for all complex valued functions $f$ on $\mathcal{F}.$ Here, the inverse Fourier transform of the measure $fd\sigma$ is defined by
$$  (fd\sigma)^\vee(m)=\int \chi(x\cdot m) f(x) d\sigma(x) =\frac{1}{|\mathcal{F}|} \sum_{x\in \mathcal{F}} \chi(x\cdot m) f(x),$$
where $\chi$  denotes the canonical additive character of $\mathbb F_q$ (see Definition \ref{def1}).
 By duality,  $R^*_{\mathcal{F}}(p\to r)$ is the same as the smallest constant such that
the restriction estimate
\begin{equation}\label{Duality} ||\widehat{g}||_{L^{p'}(\mathcal{F}, d\sigma)} \le R^*_{\mathcal{F}}(p\to r) ||g||_{L^{r'}(\mathbb F_q^n, dm)}\end{equation}
holds for all functions $g$ on $\mathcal{F},$ where $p'$ denotes the H\"older conjugate of $p,$ namely  $1/p+ 1/p'=1.$
The proof of the duality over a finite field can be found in Theorem 4.1 of Appendix in \cite{IKLPS}.  
Recall that the Fourier transform of $g$, denoted by $\widehat{g},$  is defined by
$$ \widehat{g}(x)=\sum_{m\in \mathbb F_q^d}  \chi(-x\cdot m) g(m).$$
The restriction problem for $\mathcal{F}$ is to determine all exponents $1\le p, r \le \infty$ such that 
$$ R^*_{\mathcal{F}}(p\to r) \lesssim 1.$$
Here, and throughout this paper, we use $A\lesssim B$ if there is a constant $C$ independent of $q$ such that $ A\le C B.$
We also use the notation $A\sim B$ if $A\lesssim  B$ and $B\lesssim A.$ \\

Similar to the definition of $R^*_{\mathcal{F}}(p\to r),$  one can define  $R_V^*(p\to r)$  for any algebraic variety $V$ in $\mathbb F_q^\alpha, \alpha \ge 2.$  
We say that the $L^p\to L^r$ estimate for $V$ holds if $R_V^*(p\to r)\lesssim 1.$

\begin{remark}\label{rem1}Using  H\"older's inequality and the nesting properties of $L^p$-norms over finite fields, one can observe that
$R_V^*(p_1\to r) \le R_V^*(p_2\to r) $ for $\ p_1\ge p_2, $ and $R_V^*(p\to r_1)\le R_V^*(p\to r_2)$ for $ r_1\ge r_2,$
which will allow us to reduce the proofs of our results to certain endpoint estimates.
\end{remark}
 
Over the last few decades, various methods have been developed in the study of  the restriction problem in the Euclidean setting, but  only a few of them have been applied to that in the finite field setting. 
Among such applicable methods, the most well-known method is the Stein-Tomas argument, which enables us to deduce  the ``$r$'' index such that $R^*_V(2\to r)\lesssim 1.$ To be precise, Mockenhauput and Tao \cite{MT04} addressed the following formula (see  the paragraph given after the proof of Lemma 1.6 in \cite{MT04}).
\begin{lemma}  [\cite{MT04}] \label{STA}
Let $d\sigma_v$ denote normalized surface measure on the algebraic variety $V$ in $\mathbb F_q^\alpha, \alpha\ge 2.$ 
Suppose that for some integers $0<k, s<\alpha,$  it satisfies that 
$$ |V|\sim q^s \quad \mbox{and}\quad  \max_{m\in \mathbb F_q^\alpha\setminus \{\mathbf{0}\}} |(d\sigma_v)^\vee (m)|\lesssim q^{-\frac{k}{2}}.$$
Then  $R^*_V(2\to r)\lesssim 1$ whenever   $r\ge 2+ \frac{4(\alpha-s)}{k}.$
\end{lemma}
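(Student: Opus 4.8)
The plan is to run the classical $TT^{*}$ (Stein--Tomas) argument, adapted to the finite field setting. By Remark~\ref{rem1} it suffices to prove the single endpoint estimate $R^{*}_{V}(2\to r_{0})\lesssim 1$ with $r_{0}:=2+\frac{4(\alpha-s)}{k}$, since all larger exponents then follow automatically. Setting $Tf:=(f\,d\sigma_{v})^{\vee}$, we have $R^{*}_{V}(2\to r_{0})=\|T\|_{L^{2}(V,d\sigma_{v})\to L^{r_{0}}(\mathbb F_{q}^{\alpha},dm)}$, and a short computation using the orthogonality relations of the canonical additive character $\chi$ identifies the adjoint as $T^{*}h=\widehat{h}\,|_{V}$ and gives $TT^{*}h=h*(d\sigma_{v})^{\vee}$, the convolution being taken with respect to the counting measure $dm$. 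By the standard $TT^{*}$ identity, $\|T\|^{2}=\|TT^{*}\|_{L^{r_{0}'}(dm)\to L^{r_{0}}(dm)}$, so the whole problem reduces to bounding this convolution operator by an absolute constant.

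The key maneuver is to peel off the value of the kernel at the origin. Since $(d\sigma_{v})^{\vee}(\mathbf 0)=1$, I would write
\[ (d\sigma_{v})^{\vee}=\delta_{\mathbf 0}+G,\qquad \delta_{\mathbf 0}:=1_{\{\mathbf 0\}}, \]
so that $G(\mathbf 0)=0$, and the decay hypothesis $\max_{m\ne\mathbf 0}|(d\sigma_{v})^{\vee}(m)|\lesssim q^{-k/2}$ gives $\|G\|_{L^{\infty}(dm)}\lesssim q^{-k/2}$. Convolution against $\delta_{\mathbf 0}$ is the identity, and since $dm$ is counting measure and $r_{0}\ge 2\ge r_{0}'$ we have $\|h\|_{L^{r_{0}}(dm)}\le\|h\|_{L^{r_{0}'}(dm)}$; hence the $\delta_{\mathbf 0}$ part is harmless. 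For the error term $h\mapsto h*G$ I would record two elementary bounds: Young's inequality gives $\|h*G\|_{L^{\infty}(dm)}\le\|G\|_{L^{\infty}(dm)}\|h\|_{L^{1}(dm)}\lesssim q^{-k/2}\|h\|_{L^{1}(dm)}$; and Plancherel together with $\widehat{h*G}=\widehat{h}\,\widehat{G}$ gives $\|h*G\|_{L^{2}(dm)}\le\|\widehat{G}\|_{\infty}\|h\|_{L^{2}(dm)}$, where a direct computation (Fourier inversion for $(d\sigma_{v})^{\vee}$ together with $|V|\sim q^{s}$) shows $\widehat{G}=\frac{q^{\alpha}}{|V|}1_{V}-1$, so $\|\widehat{G}\|_{\infty}\sim q^{\alpha-s}$.

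Finally I would interpolate these two estimates by Riesz--Thorin. The interpolation is automatically conjugate: at parameter $\theta\in(0,1]$ one lands on a bounded map $L^{(2/\theta)'}(dm)\to L^{2/\theta}(dm)$ with operator norm $\lesssim (q^{-k/2})^{1-\theta}(q^{\alpha-s})^{\theta}=q^{-k/2+\theta(k/2+\alpha-s)}$, and this is $\lesssim 1$ exactly when $\theta\le\frac{k}{2(\alpha-s)+k}$, i.e.\ when $2/\theta\ge r_{0}$. Taking $\theta=2/r_{0}$ therefore yields $\|h*G\|_{L^{r_{0}}(dm)}\lesssim\|h\|_{L^{r_{0}'}(dm)}$, and combining with the $\delta_{\mathbf 0}$ term gives $\|TT^{*}\|_{L^{r_{0}'}(dm)\to L^{r_{0}}(dm)}\lesssim 1$; hence $R^{*}_{V}(2\to r_{0})\lesssim 1$, and the claim follows.

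I do not expect a serious obstacle: the argument is soft, and nearly all the effort goes into carefully tracking the finite-field Fourier normalizations --- counting measure $dm$ versus the normalized measure $d\sigma_{v}$, and the attendant constants in Plancherel and in $\widehat{(d\sigma_{v})^{\vee}}$. The one point that genuinely must be right is the splitting $(d\sigma_{v})^{\vee}=\delta_{\mathbf 0}+G$: interpolating instead with the full kernel $(d\sigma_{v})^{\vee}$, whose $L^{\infty}(dm)$ norm equals $1$ rather than $q^{-k/2}$, would recover only the trivial exponent $r=\infty$. Isolating the point mass at the origin is precisely what lets the decay exponent $k$ enter and produce the sharp Stein--Tomas threshold $r_{0}=2+\frac{4(\alpha-s)}{k}$.
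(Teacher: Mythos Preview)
Your argument is correct and is precisely the standard Stein--Tomas $TT^{*}$ argument over finite fields. The paper does not supply its own proof of this lemma---it simply cites \cite{MT04}---but your decomposition $(d\sigma_{v})^{\vee}=\delta_{\mathbf 0}+G$ followed by $L^{1}\!\to\! L^{\infty}$ and $L^{2}\!\to\! L^{2}$ bounds and interpolation is exactly the mechanism the paper later refines in Section~4 (with the finer splitting $(d\sigma)^{\vee}=\delta_{\mathbf 0}+\sum_{j}K_{j}$) to go beyond what this lemma alone can give.
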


One curious question that naturally arises from the above lemma is whether the value of $k,$ which gives the optimal Fourier decay estimate on the surface measure, yields the optimal $r$ index for the $L^2\to L^r$ estimate for $V$. This question is the same as follows.\\
If  $|V|\sim q^s$ and   $\max\limits_{m\in \mathbb F_q^\alpha\setminus \{\mathbf{0}\}} |(d\sigma_v)^\vee (m)|\sim q^{-\frac{k}{2}},$ then  for all $ 1\le r < 2+ \frac{4(\alpha-s)}{k},$   is it impossible that  $R^*_V(2\to r)\lesssim 1$?
In the finite field setting, it turns out that the answer is, in general, ``No''.  
For instance,  let $d\sigma_P$ be the normalized surface measure on the paraboloid  $P$ in $\mathbb F_q^d, d\ge2,$ 
\begin{equation}\label{DefP} P:=\{x\in \mathbb F_q^d: x_1^2+ x_2^2+ \cdots + x_{d-1}^2= x_d\}.\end{equation}
Mockenhaupt and Tao \cite{MT04} observed that  for all dimensions $d\ge 2,$  
$$ |P|=q^{d-1} \quad \mbox{and}\quad  \max_{m\in \mathbb F_q^d\setminus \{\mathbf{0}\}} |(d\sigma_P)^\vee (m)|\sim q^{-\frac{d-1}{2}}.$$
Hence, if we invoke Lemma \ref{STA}, by taking $\alpha=d,  s=k=d-1,$  then we obtain
\begin{equation}\label{STT} R^*_P\left(2\to \frac{2d+2}{d-1}\right) \lesssim 1.\end{equation}
This result is referred to as the Stein-Tomas result, which provides the sharp $L^2\to L^r$ exponent for the piece of the paraboloid in the Euclidean case in the sense that  the exponent $r$ for the estimate $R^*(2\to r)\lesssim 1$ cannot be lower than $\frac{2d+2}{d-1}.$
However, in the finite field setting,  the Stein-Tomas exponent is not sharp except for  the following specific cases:
\begin{itemize}
\item $d\ge 3$ is odd and $-1$ is a square number in $\mathbb F_q.$
\item $d=4\ell +1$ for $\ell\in \mathbb N$, and $-1$ is not a square number in $\mathbb F_q,$
\end{itemize}
For any other cases including the even dimensions $d\ge 2,$ the Stein-Tomas result can be significantly improved to much smaller exponents (for example, see Theorems \ref{restEven} and \ref{restOdd} in Section \ref{Sec5}).\\

In the study of the restriction problem for algebraic varieties over finite fields,  there are currently two important research trends.\\

Regarding the above question and examples, one of the main concerns  is to verify  the optimal $L^2\to L^r$ estimate for a variety $V,$ where   the Stein-Tomas argument, Lemma 1, fails to yield the optimal $L^2\to L^r$ estimate for $V.$ 
Our first result below is closely related to this issue   for the flat variety $\mathcal{F}$ in $\mathbb F_q^n.$ We establish the sharp $L^2\to L^r$  restriction estimate for  the flat disk.
\begin{theorem}  \label{mainI}
Let $\mathcal{F}$ be the flat disk in $\mathbb F_q^n,  n=2d\ge 4,$ defined as in \eqref{DefFlat}. Then we have
$$ R^*_{\mathcal{F}} \left(2\to \frac{2n+4}{n-2}\right) =R^*_{\mathcal{F}} \left(2\to \frac{2d+2}{d-1}\right)\lesssim 1.$$
\end{theorem}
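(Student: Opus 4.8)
The plan is to dualize the estimate and pass to the associated $TT^*$ (convolution) operator, which forces us to understand $(d\sigma)^\vee$ exactly. Write $n=2d$, set $r_0:=\frac{2n+4}{n-2}=\frac{2d+2}{d-1}$ with conjugate exponent $r_0'=\frac{2d+2}{d+3}$, and note that $r_0>2>r_0'$ for every $d\ge 2$. By the duality \eqref{Duality}, Remark \ref{rem1}, and the standard $TT^*$ argument (the square of the extension operator norm equals the norm of convolution against $(d\sigma)^\vee$ from $L^{r_0'}$ to $L^{r_0}$), it suffices to prove
$$\big\|\,g * (d\sigma)^\vee\,\big\|_{L^{r_0}(\mathbb F_q^n,\,dm)}\ \lesssim\ \|g\|_{L^{r_0'}(\mathbb F_q^n,\,dm)}\qquad\text{for all }g\colon\mathbb F_q^n\to\mathbb C,$$
where $(g*(d\sigma)^\vee)(m)=\sum_{m'\in\mathbb F_q^n} g(m-m')\,(d\sigma)^\vee(m')$.

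The first and main step is to evaluate $(d\sigma)^\vee$ explicitly. Writing a point of the dual space $\mathbb F_q^n=\mathbb F_q^{d-1}\times\mathbb F_q\times\mathbb F_q^{d-1}\times\mathbb F_q$ as $m=(a,b,c,e)$ and performing the sum over $\beta$ first in the definition of $(d\sigma)^\vee$ (so that for $e\neq 0$ only $\alpha=-e^{-1}c$ survives), one obtains
$$(d\sigma)^\vee(a,b,c,e)=\begin{cases}(d\sigma_P)^\vee(a,b) & \text{if } e=0,\ c=0,\\[1pt] 0 & \text{if } e=0,\ c\neq 0,\\[1pt] q^{-(d-1)}\,\chi\!\big(-e^{-1}(a\cdot c)+e^{-2}b\,(c\cdot c)\big) & \text{if } e\neq 0,\end{cases}$$
where $P\subset\mathbb F_q^d$ is the paraboloid \eqref{DefP}. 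The structural point is that the only part of $(d\sigma)^\vee$ carrying the weak Fourier decay $q^{-(d-1)/2}$ lives on the $d$-dimensional subspace $H:=\{(a,b,0,0)\}\cong\mathbb F_q^d$, where it agrees exactly with $(d\sigma_P)^\vee$, whereas off $H$ the kernel has constant modulus $q^{-(d-1)}$. Accordingly we split $(d\sigma)^\vee=K_{\mathrm{main}}+K_{\mathrm{err}}$, with $K_{\mathrm{main}}:=1_H\cdot(d\sigma_P)^\vee$ (under the identification $H\cong\mathbb F_q^d$) and $K_{\mathrm{err}}$ supported on $\{e\neq 0\}$ with $\|K_{\mathrm{err}}\|_{L^\infty}=q^{-(d-1)}$.

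For the main piece, convolution against $K_{\mathrm{main}}$ respects the splitting of $\mathbb F_q^n$ into the $(m_1,m_2)$-coordinates and the $(m_3,m_4)$-coordinates: on each fixed slice $\{(\,\cdot\,,\,\cdot\,,m_3,m_4)\}$ it is precisely convolution on $\mathbb F_q^d$ against $(d\sigma_P)^\vee$. Applying the $TT^*$ form of the Stein--Tomas estimate \eqref{STT} for the paraboloid in $\mathbb F_q^d$ on each slice and then recombining the slices via the elementary inequality $\sum_j A_j^{\,r_0/r_0'}\le\big(\sum_j A_j\big)^{r_0/r_0'}$ (valid since $A_j\ge 0$ and $r_0/r_0'\ge 1$) yields $\|g*K_{\mathrm{main}}\|_{L^{r_0}}\lesssim\|g\|_{L^{r_0'}}$.

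For the error piece, set $T_{\mathrm{err}}g:=g*K_{\mathrm{err}}$. From $\|K_{\mathrm{err}}\|_{L^\infty}=q^{-(d-1)}$ we get $\|T_{\mathrm{err}}\|_{L^1\to L^\infty}\le q^{-(d-1)}$, and by Plancherel $\|T_{\mathrm{err}}\|_{L^2\to L^2}=\|\widehat{K_{\mathrm{err}}}\|_{L^\infty}$; since $\widehat{(d\sigma)^\vee}(y)=q^2\,1_{\mathcal F}(y)$ and $\widehat{K_{\mathrm{main}}}(y)=q\,1_P(y_1,y_2)$, and $\mathcal F$ projects into $P$ in the first $d$ coordinates, one reads off $\|\widehat{K_{\mathrm{err}}}\|_{L^\infty}\le q^2$. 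Riesz--Thorin interpolation with $\theta=2/r_0=(d-1)/(d+1)$ then gives $\|T_{\mathrm{err}}\|_{L^{r_0'}\to L^{r_0}}\le\big(q^{-(d-1)}\big)^{1-\theta}\big(q^{2}\big)^{\theta}=q^{-(d-1)+(d+1)\theta}=q^{0}$. Adding the two pieces via the triangle inequality completes the proof. I expect the genuinely new ingredient to be the exact evaluation of $(d\sigma)^\vee$ and the resulting decomposition; once the ``paraboloid in $\mathbb F_q^d$ fibered over a $d$-dimensional family of slices'' mechanism is isolated, both resulting bounds are routine — the only mild subtlety being that the error kernel must be handled by interpolation rather than by Young's inequality (its $L^{r_0/2}$-norm is far too large), which works precisely because the cancellation in $K_{\mathrm{err}}$ makes $\|\widehat{K_{\mathrm{err}}}\|_{L^\infty}$ as small as $q^2$.
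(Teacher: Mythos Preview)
Your proof is correct, and while it rests on the same explicit computation of $(d\sigma)^\vee$ as the paper's Proposition~\ref{Promain}, it organizes the argument differently and, in some respects, more cleanly.

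The paper partitions $\mathbb F_q^n$ into six regions $\Omega_0,\dots,\Omega_5$ and handles the three nontrivial kernels $K_2,K_4,K_5$ separately, each time interpolating a Young-type $L^1\to L^\infty$ bound (from $\max|K_j|$) against an $L^2\to L^2$ bound (from $\max|\widehat{K_j}|$, computed via $d\sigma\ast\widehat{\Omega_j}$). Your two-piece decomposition is coarser: your $K_{\mathrm{err}}$ is the paper's $K_2+K_5$, and your $K_{\mathrm{main}}$ absorbs $\delta_{\mathbf 0}$ together with the paper's $K_4$. The gain is twofold. First, by recognizing $K_{\mathrm{main}}$ as $(d\sigma_P)^\vee$ lifted from $\mathbb F_q^d$ to the subspace $H$, you can invoke the paraboloid Stein--Tomas estimate \eqref{STT} slice-by-slice and conclude directly, whereas the paper essentially re-derives that estimate for $K_4$ from scratch via its own interpolation. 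Second, your computation of $\widehat{K_{\mathrm{err}}}=q^2\,1_{\mathcal F}-q\,1_P(y_1,y_2)$ is shorter than the paper's evaluation of $\widehat{K_2}$ and $\widehat{K_5}$ via $\widehat{\Omega_2},\widehat{\Omega_5}$, and the bound $\|\widehat{K_{\mathrm{err}}}\|_{L^\infty}\le q^2$ (in fact $q^2-q$) follows immediately from the observation that $\mathcal F$ projects into $P$.

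What each approach buys: yours is more conceptual, exposing the sharp $L^2$ estimate for $\mathcal F\subset\mathbb F_q^{2d}$ as ``paraboloid Stein--Tomas in $\mathbb F_q^d$, fibered over $\mathbb F_q^d$, plus a well-behaved error,'' and it is a bit shorter. The paper's approach is more self-contained (it does not cite the paraboloid estimate as a black box) and records finer pointwise information about $(d\sigma)^\vee$ on each region, which may be of independent use.
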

\begin{itemize}
\item By the nesting property of norms over finite fields (see Remark \ref{rem1}),  Theorem \ref{mainI} implies that  $R^*_{\mathcal{F}} (2\to r) \lesssim 1$ for all $r$ with  $\frac{2n+4}{n-2}=\frac{2d+2}{d-1}\le r\le \infty.$ Moreover, it provides the sharp $L^2\to L^r$ restriction estimate for the flat disk. This can be shown from Lemma \ref{NecC} in the following section. More precisely, the proof of the sharpness will be given in Remark \ref{sharpexa}.

\item Theorem \ref{mainI} cannot be obtained from a direct application of the Stein-Tomas argument, namely Lemma \ref{STA}. Indeed,  it follows from Proposition \ref{Promain} that
$ \max_{m\in \mathbb F_q^n\setminus \{\mathbf{0}\}} |(d\sigma)^\vee (m)|= q^{-\frac{k}{2}}$ with $k=\frac{n-2}{2}.$
In addition, we see that $|\mathcal{F}|=q^{n-2}.$ Hence,  applying Lemma \ref{STA} with $s=n-2, k=\frac{n-2}{2},$  we get
$R^*_{\mathcal{F}}(2\to \frac{2n+12}{n-2})\lesssim 1,$ which, however, is much weaker than Theorem \ref{mainI}.

\item As a main idea to derive Theorem \ref{mainI},  we decompose the surface measure on $\mathcal{F}$ as the 6 surface measures such that they have pairwise disjoint supports and each of them has a constant Fourier decay on the support, which makes our analysis much more efficient and simpler.
\end{itemize}

\begin{remark}
One of main ingredients to deduce Theorem \ref{mainI} is based on  the explicit Fourier transform on the surface measure $d\sigma$ of the flat disk $\mathcal{F},$ which will be given as Proposition \ref{Promain}. The key idea to compute it is to view the flat disk $\mathcal{F}$ in \eqref{DefFlat}  as the set of common solutions of the following two equations:
\begin{equation} \label{defFeq} x_d=x_1^2+x_2^2+\cdots + x_{d-1}^2, \quad  x_{2d}= x_1x_{d+1} +x_2x_{d+2} + \cdots+ x_{d-2}x_{2d-2}+ x_{d-1} x_{2d-1}.\end{equation} 
Then, adapting the argument by the discrete Fourier analysis due to Iosevich and Rudnev \cite{IR07}, we are able to relate the equations defining the flat disk to certain exponential sums, which essentially reduce to the well-understood Gauss sums.

\end{remark}

 The other interesting research trend is to deduce  a new $L^p\to L^r$ restriction estimate for $V$ such that  the exponent $p$ is not based on ``$2$''.  Here, and throughout, a new $L^p\to L^r$ restriction result  means any  restriction result which cannot be obtained as a direct corollary of the optimal $L^2\to L^r$ estimate for $V.$ \\ 
 
In the Euclidean setting,  various methods have been developed to induce new $L^p\to L^r$ estimates (see, for example, \cite{To75, Wo01, To03, Gu16, HR19, Wa21}).  However,  there are many limitations to the application of such techniques in the finite field.
It has been considered as an extremely hard problem to deduce  a new $L^p\to L^r$ result for an algebraic variety. However,   Mockenhaupt and Tao \cite{MT04} proposed a new method to deduce a new $L^p\to L^r$ estimate for the flat disk $\mathcal{F}$ in $\mathbb F_q^n$ with $n\ge 4$ even.  More precisely, they related the problem to the Kakeya maximal estimate and the restriction estimates for the paraboloids in $\mathbb F_q^{n/2}.$ As a result, they addressed a new $L^{36/13}\to L^{18/5+ \varepsilon}$ estimate for any $\varepsilon >0$ in the case when $n=6$ and $-1$ is not a square number of $\mathbb F_q.$\\

Since the Mockenhauput and Tao's work \cite{MT04},  much improvement on the restriction estimate for paraboloids has been made. Furthermore,  the maximal Kakeya conjecture over finite fields was settled by Ellenberg-Oberlin-Tao \cite{EOT10}.  Hence,  improved  new $L^p\to L^r$ restriction estimates should be obtained. As such  possible improvements have not been addressed in the literature,  in this paper we will  clearly indicate the improved new $L^p\to L^r$ restriction results for the flat disk.  To this end, we will formulate a proposition, which makes it possible to derive new $L^p\to L^r$ restriction estimates for the flat disk in $\mathbb F_q^{2d}$ directly from restriction results for paraboloids in $\mathbb F_q^d$ (see Proposition \ref{LemFormula}). 
\\

Now we state  our new $L^p\to L^r$ restriction estimates for the flat disk $\mathcal{F}$ in $\mathbb F_q^{n}=\mathbb F_q^{2d}, d\ge 2.$  For even integers $d\ge 2,$ we have the following consequences.
\begin{theorem}\label{main2} Let $\mathcal{F}$ be the flat disk in $\mathbb F_q^{2d}.$ 
\begin{enumerate}
\item [(1)] If $d=2,$ then $R^*_{\mathcal{F}}(4\to 4)\lesssim 1.$
\item [(2)] If $d=4,$ then $R^*_{\mathcal{F}}\left(\frac{28}{11}\to \frac{28}{9}\right)\lesssim 1.$
\item [(3)] If $d=4$ and $q$ is prime, then $R^*_{\mathcal{F}} \left(\frac{18}{7} \to 3\right)\lesssim 1$
\item [(4)] If $d=6,$ then $R^*_{\mathcal{F}}\left(\frac{80+30\varepsilon}{34+15\epsilon}\to \frac{8}{3}+ \varepsilon \right)\lesssim 1$ for all $\varepsilon >0.$
\item [(5)] If $d\ge 8$ is even, then $R^*_{\mathcal{F}}\left(\frac{2d^2+2d-4}{d^2-2} \to \frac{2d+4}{d}\right)\lesssim 1.$
\end{enumerate}
\end{theorem}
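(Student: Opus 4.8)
The plan is to deduce all five assertions from a single transference mechanism, namely Proposition \ref{LemFormula}, together with known restriction estimates for paraboloids in half the ambient dimension. Recall that Proposition \ref{LemFormula} bounds $R^*_{\mathcal{F}}(p\to r)$ for $\mathcal{F}\subset \mathbb{F}_q^{2d}$ in terms of a restriction estimate $R^*_{P_d}(p_0\to r_0)\lesssim 1$ for the paraboloid $P_d\subset \mathbb{F}_q^{d}$ and the maximal Kakeya operator on $\mathbb{F}_q^{d}$. Since the maximal Kakeya conjecture over $\mathbb{F}_q^{d}$ has been settled with the sharp exponents by Ellenberg--Oberlin--Tao \cite{EOT10}, the Kakeya factor is never the bottleneck; the quality of the output depends only on the paraboloid input. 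Thus it suffices, for each value of $d$ occurring in the statement, to select the sharpest available restriction estimate for $P_d$, run it through Proposition \ref{LemFormula}, and then move down to the stated endpoint using Remark \ref{rem1}.

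Concretely I would proceed case by case.
\begin{enumerate}
\item[(1)] For $d=2$ the ``paraboloid'' is the parabola $P_2\subset\mathbb{F}_q^2$; feeding its elementary ($L^2$-type, i.e.\ additive-energy) restriction estimate into the formula yields $R^*_{\mathcal{F}}(4\to 4)\lesssim 1$. When $n=4$ one may also argue directly from the explicit Fourier transform of $d\sigma$ in Proposition \ref{Promain}, evaluating $\|(fd\sigma)^\vee\|_{L^4}$ through the Gauss-sum expression there.
\item[(2)] For $d=4$, insert the best unconditional $L^2\to L^r$ estimate for the four-dimensional paraboloid recorded in Section \ref{Sec5} (Theorem \ref{restEven}); after interpolation this gives $R^*_{\mathcal{F}}\big(\frac{28}{11}\to\frac{28}{9}\big)\lesssim 1$.
\item[(3)] For $d=4$ and $q$ prime, a stronger paraboloid input is available, coming from the incidence-theoretic improvement valid over prime fields (of the type used in \cite{IKLPS}); running it through the formula produces the sharp target $r=3=\frac{2d+4}{d}$ at $p=\frac{18}{7}$.
\item[(4)] For $d=6$, feed in the best available (near-sharp, $\varepsilon$-lossy) restriction estimate for $P_6$; the $\varepsilon$ propagates through Proposition \ref{LemFormula} and the ensuing interpolation, giving $R^*_{\mathcal{F}}\big(\frac{80+30\varepsilon}{34+15\varepsilon}\to\frac{8}{3}+\varepsilon\big)\lesssim 1$ after renaming the parameter.
\item[(5)] For even $d\ge 8$, use the sharp $L^2\to L^{(2d+4)/d}$ restriction estimate for even-dimensional paraboloids (Theorem \ref{restEven}). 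Since its target exponent equals $\frac{2d+4}{d}$ exactly, the formula returns this exponent unchanged while forcing the source exponent to $p=2+\frac{2d}{d^2-2}=\frac{2d^2+2d-4}{d^2-2}$, which lies just to the right of the $L^2$ endpoint of Theorem \ref{mainI}.
\end{enumerate}
In every case the pair produced by Proposition \ref{LemFormula} is then moved to the advertised endpoint --- when it is not already the direct output --- by interpolating against the trivial estimate $R^*_{\mathcal{F}}(1\to\infty)\lesssim 1$ or against Theorem \ref{mainI}, and by invoking the nesting of $L^p$-norms over finite fields, Remark \ref{rem1}.

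The main difficulty is not the strategy but the bookkeeping inside Proposition \ref{LemFormula} at these precise exponents: one must verify that, after balancing the H\"older exponents in the $\beta$-variable against the Kakeya gain, the binding constraint is the paraboloid estimate rather than the Kakeya term or a trivial contribution, and that the subsequent interpolation lands exactly on the claimed $(p,r)$. The arithmetic is most delicate in part (4), where the $\varepsilon$ coming from the $P_6$ restriction estimate must be carried through every interpolation without deterioration; and the $d=2$ case deserves separate treatment, since there the ``paraboloid'' degenerates and the direct route via Proposition \ref{Promain} is the cleanest.
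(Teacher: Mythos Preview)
Your approach is correct and matches the paper's: each part follows by plugging the corresponding paraboloid input from Theorem \ref{restEven} directly into Proposition \ref{LemFormula}. Note, however, that no subsequent interpolation or nesting step is needed in any of the five cases --- substituting each value of $r$ into the formula $p=\tfrac{2r(d-1)}{rd-r-2}$ already produces the advertised $(p,r)$ pair exactly, so the hedging about ``moving to the advertised endpoint'' and the worry about propagating the $\varepsilon$ in part (4) are unnecessary.
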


For odd integers $d\ge 3,$  we obtain the following restriction estimates. 
\begin{theorem}\label{main3}
Let $\mathcal{F}$ be the flat disk in $\mathbb F_q^{2d}.$ 
\begin{enumerate}
\item [(1)] If $d=3$ and $q\equiv 3 \pmod{4},$ then  $R^*_{\mathcal{F}} \left( \frac{36-10\varepsilon}{13-5\varepsilon}\to \frac{18}{5}-\varepsilon \right) \lesssim 1$ for some $\varepsilon >0.$
\item [(2)] If $d=3$ and $q\equiv 3 \pmod{4}$ is prime,  then  $R^*_{\mathcal{F}}\left(\frac{376+106\varepsilon}{135+53\varepsilon} \to \frac{188}{53}+\varepsilon \right)\lesssim 1$ for all $\varepsilon >0.$
\item [(3)] If $d\ge 3 $ is odd and $q\equiv 1 \pmod{4},$ then  $R^*_{\mathcal{F}}\left(\frac{2d+2}{d} \to \frac{2d+2}{d-1}\right) \lesssim 1.$
\item [(4)]  If $d=  4\ell+1$ with $\ell \in \mathbb N,$ and $q\equiv 3 \pmod{4},$ then  $R^*_{\mathcal{F}}\left(\frac{2d+2}{d}  \to \frac{2d+2}{d-1}\right) \lesssim 1.$
\item [(5)]   If $d=  4\ell+3$, with $\ell \in \mathbb N,$ and $q\equiv 3 \pmod{4}, $ then  $R^*_{\mathcal{F}}\left(\frac{2d^2+2d-4}{d^2-2} \to \frac{2d+4}{d}\right) \lesssim 1.$
\end{enumerate}
\end{theorem}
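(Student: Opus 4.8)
The plan is to deduce Theorem~\ref{main3} from the transference Proposition~\ref{LemFormula}, which converts a restriction estimate for the paraboloid $P\subset\mathbb F_q^d$ into one for the flat disk $\mathcal F\subset\mathbb F_q^{2d}$ (with the finite field Kakeya maximal bound of Ellenberg--Oberlin--Tao~\cite{EOT10} already incorporated into that proposition, so no separate Kakeya input is needed). For each of the five items I would feed in the sharpest restriction estimate currently known for the paraboloid in $\mathbb F_q^d$ under the stated arithmetic hypothesis, and then interpolate the resulting bound for $\mathcal F$ against the sharp $L^2$ estimate of Theorem~\ref{mainI} together with the trivial $R^*_{\mathcal F}(1\to\infty)\lesssim 1$, choosing the interpolation weight so as to land exactly on the displayed exponents. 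Since $d$ is odd, which paraboloid estimate is available depends on whether $-1$ is a square in $\mathbb F_q$ and on $d$ modulo $4$, and this is precisely why the statement splits into the cases (1)--(5).

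First I would dispose of the regimes in which Stein--Tomas for the paraboloid is already sharp, namely (3) ($d\ge3$ odd, $q\equiv1\pmod{4}$) and (4) ($d=4\ell+1$, $q\equiv3\pmod{4}$). There the only available input is $R^*_P\!\left(2\to\frac{2d+2}{d-1}\right)\lesssim1$, and Proposition~\ref{LemFormula} returns $R^*_{\mathcal F}\!\left(\frac{2d+2}{d}\to\frac{2d+2}{d-1}\right)\lesssim1$; in view of Remark~\ref{rem1} this last estimate also follows immediately from Theorem~\ref{mainI}. For (5) ($d=4\ell+3$, $q\equiv3\pmod{4}$) the Stein--Tomas exponent for $P$ is not sharp, and I would insert the improved paraboloid estimate of Section~\ref{Sec5} (Theorem~\ref{restOdd}), the same input that drives Theorem~\ref{main2}(5); running it through Proposition~\ref{LemFormula} and optimizing the interpolation against Theorem~\ref{mainI} yields the pair $\left(\frac{2d^2+2d-4}{d^2-2},\,\frac{2d+4}{d}\right)$. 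Since $(2d+4)(d-1)=2d^2+2d-4<2d^2+2d=(2d+2)d$, one has $\frac{2d+4}{d}<\frac{2d+2}{d-1}$, so this estimate genuinely escapes the $L^2$ range and is a new result.

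The two three-dimensional items require the sharpest known $3$-dimensional paraboloid estimates. For (1) ($d=3$, $q\equiv3\pmod{4}$) one exploits that the sum of two squares is anisotropic when $-1$ is a non-square, so the paraboloid contains no line and the Stein--Tomas bound $R^*_P(2\to4)$ can be improved; feeding the improved estimate through Proposition~\ref{LemFormula} and interpolating with Theorem~\ref{mainI} (which for $d=3$ reads $R^*_{\mathcal F}(2\to4)\lesssim1$) gives $\frac{36-10\varepsilon}{13-5\varepsilon}\to\frac{18}{5}-\varepsilon$. The improvement over Mockenhaupt--Tao's original $\frac{36}{13}\to\frac{18}{5}+\varepsilon$ is exactly due to now having the sharp $L^2$ estimate Theorem~\ref{mainI} to interpolate against, in place of the weaker Stein--Tomas-type bound for $\mathcal F$ available in 2004. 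For (2) the extra hypothesis that $q$ is prime upgrades the paraboloid input to the incidence-based estimate over $\mathbb F_p$ (of Rudnev--Shkredov / Stevens--de Zeeuw type, cf.\ \cite{RS18,SZ17}), and the same procedure produces $\frac{376+106\varepsilon}{135+53\varepsilon}\to\frac{188}{53}+\varepsilon$. The $\varepsilon$'s in (1) and (2) are inherited verbatim from the $\varepsilon$-losses in those paraboloid inputs: the ``for some $\varepsilon>0$'' in (1) records a fixed numerical gain, while the ``for all $\varepsilon>0$'' in (2) records the limiting nature of the incidence bound, valid for every exponent above a threshold.

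The main obstacle is purely one of bookkeeping rather than of analysis. One has to make sure, regime by regime, that the paraboloid estimate being used really is the strongest one presently available, that Proposition~\ref{LemFormula} is applied with the correct dimensional parameters $(d,2d)$, and that the final interpolation against Theorem~\ref{mainI} and the trivial endpoints is arranged so that the output hits precisely the exponents in the statement. No new analytic estimate is proved here; all of the weight rests on Proposition~\ref{LemFormula}, Theorem~\ref{mainI}, \cite{EOT10}, and the cited paraboloid results, so the work is to combine them optimally. I expect case (2), $d=3$ with $q$ prime, to demand the most care, because its output exponent is the most refined and is the most sensitive to the exact shape of the incidence bound fed in.
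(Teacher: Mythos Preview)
Your approach is the paper's approach: feed the odd-dimensional paraboloid estimates of Theorem~\ref{restOdd} into Proposition~\ref{LemFormula}. However, the interpolation step you describe against Theorem~\ref{mainI} and the trivial endpoint is superfluous. Proposition~\ref{LemFormula} already outputs exactly the $(p,r)$ pairs displayed in the statement: the formula there is $p=\dfrac{2r(d-1)}{r(d-1)-2}$, and plugging in each paraboloid exponent $r$ from Theorem~\ref{restOdd} lands directly on the target. For example, with $d=3$ and $r=\tfrac{18}{5}-\varepsilon$ one gets $p=\dfrac{4r}{2r-2}=\dfrac{36-10\varepsilon}{13-5\varepsilon}$; with $r=\tfrac{188}{53}+\varepsilon$ one gets $p=\dfrac{376+106\varepsilon}{135+53\varepsilon}$; with $r=\tfrac{2d+2}{d-1}$ one gets $p=\tfrac{2d+2}{d}$; and with $r=\tfrac{2d+4}{d}$ one gets $p=\tfrac{2d^2+2d-4}{d^2-2}$. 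So no optimization or interpolation is needed, and the ``bookkeeping obstacle'' you anticipate does not arise. A minor citation point: the paraboloid input for case~(2) is Lewko's estimate \cite{Le20}, not the Rudnev--Shkredov bound (which is the $d=4$ input used in Theorem~\ref{main2}(3)).
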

\begin{itemize} 
\item 
Notice from Remark \ref{rem1} that  the smaller exponent implies the better restriction result for $\mathcal{F}$ in $\mathbb F_q^n, $ with $n=2d\ge 4.$  In order to deduce further results from a known restriction estimate, one can invoke the interpolation theorem (Theorem \ref{tmR}) with the trivial estimate $R^*_{\mathcal{F}} (1\to \infty)\lesssim 1.$ 
Hence,  Conjecture \ref{Conj} in the following section shows that  to settle the restriction problem for the flat disk $\mathcal{F}$ in $\mathbb F_q^{n},$ with $n=2d\ge 4,$  it suffices to establish the critical endpoint estimate:
$$R^*_{\mathcal{F}}\left(\frac{2n}{n-2} \to \frac{2n}{n-2}\right) = R^*_{\mathcal{F}}\left(\frac{2d}{d-1} \to \frac{2d}{d-1}\right) \lesssim 1.$$

\item Observe that the first part of Theorem \ref{main2} is the solution of the restriction problem for the flat disk $\mathcal{F} \subset \mathbb F_q^4.$ This was first pointed out by Mockenhaupt and Tao \cite{MT04} but the detail proof was not given. 

\item The third and  fourth parts of Theorem \ref{main3} are not new $L^p\to L^r$ restriction estimates for $\mathcal{F}$ as the sharp $L^2\to L^r$ result, Theorem \ref{mainI}, already implies those results. However, all other results including Theorem \ref{main2} are new $L^p\to L^r$ estimates.
\item
As we will see from Conjecture \ref{Conj},  the conjectured exponents for $R^*_{\mathcal{F}}(p\to r)$ to be bounded  are irrelevant of the ground field $\mathbb F_q.$  Hence,  it is natural to expect that  one can remove  the conditions on $q$ appearing in the statement of Theorem \ref{main3}.

\item One can obtain further results by interpolating  the  sharp $L^2\to L^r$ estimate of Theorem \ref{mainI}  and   the results of Theorems  \ref{main2} and \ref{main3}.  For example,  the  previously known estimate $R^*_{\mathcal{F}}(\frac{36}{13}\to \frac{18}{5}+\varepsilon) \lesssim 1$ for  $n=6$ (or $d=3$), due to Mockenhaupt and Tao,   can be improved to $R^*_{\mathcal{F}}(\frac{36}{13}\to \frac{72}{20+5\varepsilon})\lesssim 1,$ which follows by interpolating  the first part of Theorem \ref{main3} and  the result  $R^*_{\mathcal{F}}(2\to 4)\lesssim 1,$ which is Theorem \ref{mainI} for $d=3.$
\end{itemize}

\begin{remark} 
Theorems \ref{main2} and \ref{main3} are much weaker than the conjectured results (Conjecture \ref{Conj}) except for the first result of Theorem \ref{main2}. We notice that one cannot settle this question by using our method in this paper. As we shall see, our results follow by applying the $L^2\to L^r$ restriction estimate for the paraboloid in $\mathbb F_q^{n/2}$ (see Proposition \ref{LemFormula}). Even using the optimal $L^2\to L^r$ restriction estimate for the paraboloid, it fails to produce the conjectured result (see, for example, Remark \ref{rem53}). For this reason, it leaves the question of finding a new methodology to completely solve this problem.
In addition, it would be interesting to extend our work to general quadratic surfaces of co-dimension bigger than one. In the Euclidean case, such problems have been extensively studied. We refer the reader to \cite{Ch85, BL04, BLL17, LL19}. 
However, it seems that the Euclidean arguments do not work in the finite field case.
\end{remark}

The remaining part of this paper will be essentially designed to give proofs of our main results.
\begin{notation} 
Throughout this paper, we will use the following notation:
\begin{itemize}
\item  For any integer $\alpha\ge 1$,   we use  $\mathbf{0}\in \mathbb F_q^\alpha$ to denote the zero vector in $\mathbb F_q^\alpha.$  In particular,  we write $0$ for $\mathbf{0}$ when $\alpha=1.$  We write $\delta_{\mathbf{0}}$ for the indicator function of $\{\mathbf{0}\},$ namely,   $\delta_{\mathbf{0}}(\alpha)=1$ for $\alpha=\mathbf{0},$ and $0$ otherwise.
\item For a vector $m$ in $\mathbb F_q^\alpha,$  we write $m_j$ to denote the $j$-th coordinator of $m.$  For example, we have
$m=(m_1, \ldots, m_\alpha)\in \mathbb F_q^\alpha.$    We also define 
$$ ||m||:= \sum_{j=1}^\alpha m_j^2.$$
\item For a simple notation, we identify a set $E$ with the indicator function $1_E$ of the set $E,$  where $1_E(x)=1$ for $x\in E,$ and $0$ otherwise.  For example, we write $\widehat{E}$ for $\widehat{1_E}$, the Fourier transform  of the indicator function $1_E.$

\item  For $1\le r\le \infty,$   the H\"older conjugate of $r$  is denoted by
$r'$, namely,  $1/r + 1/r'=1.$
\end{itemize}
\end{notation}

\section{Preliminaries}

Some necessary conditions for $R^*_V(p\to r)$ to be bounded can be determined by the size of the underlying variety $V$ and  the size of any maximal affine subspace lying in $V.$ 
\begin{lemma}[Mockenhaupt-Tao, \cite{MT04}]\label{MTNe} Let $V$ be an algebraic variety in $\mathbb F_q^\alpha, \alpha \ge 2,$  with  the size $|V|=q^s,  0< s< \alpha.$
In addition, assume that  the variety $V$ contains an affine subspace $H$ with the size $|H|=q^k,  0 <k\le s.$ 
If $R^*_V(p\to r)\lesssim 1$ for some $1\le p, r\le \infty,$  then we have
$$ r\ge \frac{2\alpha}{s} \quad\mbox{and}\quad r\ge \frac{p(\alpha-k)}{(p-1)(s-k)}.  $$
\end{lemma}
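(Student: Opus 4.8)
The plan is to prove both necessary conditions by testing the extension estimate $\|(fd\sigma)^\vee\|_{L^r(dm)} \le R^*_V(p\to r)\|f\|_{L^p(d\sigma)}$ on two natural families of test functions, and then extract the inequalities by comparing the growth rates of both sides as functions of $q$. The first inequality $r\ge 2\alpha/s$ comes from choosing $f\equiv 1$ on $V$; the second, $r\ge p(\alpha-k)/((p-1)(s-k))$, comes from choosing $f$ to be the indicator of the affine subspace $H\subseteq V$ (suitably renormalized with respect to $d\sigma$).

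\emph{Step 1: the test function $f\equiv 1$.} With $f\equiv 1$ we have $\|f\|_{L^p(d\sigma)} = 1$ since $d\sigma$ is a probability measure. On the other side, $(d\sigma)^\vee(\mathbf{0}) = 1$, so $\|(d\sigma)^\vee\|_{L^r(dm)} \gtrsim 1$ already from the single point $m=\mathbf{0}$, which gives nothing. The useful bound instead comes from Plancherel: $\|(d\sigma)^\vee\|_{L^2(\mathbb F_q^\alpha, dm)}^2 = \sum_m |(d\sigma)^\vee(m)|^2$. Writing $d\sigma = (q^\alpha/|V|)1_V\,dx$ and using orthogonality of characters, one computes $\|(d\sigma)^\vee\|_{L^2(dm)}^2 = q^\alpha/|V| = q^{\alpha-s}$. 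Since $dm$ is counting measure on $q^\alpha$ points and $r\ge 2$ may be assumed (for $r<2$ the stated bound $r\ge 2\alpha/s > 2$ would be even stronger and one argues separately, or simply notes the conclusion is about whether the estimate can hold), Hölder on the finite measure space relates $L^2$ and $L^r$ norms: actually the clean way is to use the \emph{lower} bound $\|(d\sigma)^\vee\|_{L^r(dm)} \ge q^{\alpha(1/r - 1/2)}\|(d\sigma)^\vee\|_{L^2(dm)}$, valid because $\|h\|_{L^r(dm)} \ge |\mathrm{supp}(dm)|^{1/r-1/2}\|h\|_{L^2(dm)}$ for $r\ge 2$ on a space with $q^\alpha$ points. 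Combining, $R^*_V(p\to r) \gtrsim q^{\alpha/r - \alpha/2 + (\alpha-s)/2} = q^{\alpha/r - s/2}$, and boundedness forces $\alpha/r - s/2 \le 0$, i.e. $r \ge 2\alpha/s$.

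\emph{Step 2: the test function $f = 1_H$.} Let $H\subseteq V$ be the affine subspace with $|H| = q^k$. Normalize: $\|1_H\|_{L^p(d\sigma)}^p = \frac{1}{|V|}\sum_{x\in V}1_H(x) = |H|/|V| = q^{k-s}$, so $\|1_H\|_{L^p(d\sigma)} = q^{(k-s)/p}$. For the extension side, $(1_H d\sigma)^\vee(m) = \frac{1}{|V|}\sum_{x\in H}\chi(x\cdot m)$. Since $H$ is an affine subspace, say $H = v_0 + W$ for a linear subspace $W$ of dimension $k$, this sum equals $\frac{|H|}{|V|}\chi(v_0\cdot m)1_{W^\perp}(m)$, which has modulus $q^{k-s}$ for the $q^{\alpha-k}$ values of $m\in W^\perp$ and $0$ otherwise. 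Hence $\|(1_H d\sigma)^\vee\|_{L^r(dm)} = q^{k-s}\cdot (q^{\alpha-k})^{1/r} = q^{k-s + (\alpha-k)/r}$. Plugging into the extension estimate: $q^{k-s+(\alpha-k)/r} \le R^*_V(p\to r)\, q^{(k-s)/p}$, so $R^*_V(p\to r) \gtrsim q^{(k-s)(1-1/p) + (\alpha-k)/r} = q^{-(s-k)/p' + (\alpha-k)/r}$. Boundedness then demands $(\alpha-k)/r \le (s-k)/p'$, and since $1/p' = (p-1)/p$ this rearranges to $r \ge \frac{p(\alpha-k)}{(p-1)(s-k)}$, as claimed.

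\emph{Main obstacle.} The computations themselves are routine exponential-sum/Plancherel manipulations; the one point requiring care is the clean comparison of $L^r$ and $L^2$ norms on the discrete space $(\mathbb F_q^\alpha, dm)$ in Step 1 (i.e. getting the inequality in the right direction with the correct power of $q^\alpha$), and making sure the normalization conventions for $d\sigma$ versus $dm$ versus $dx$ are tracked consistently throughout—since $d\sigma = q^\alpha|V|^{-1}1_V\,dx$ and the ambient measure on the extension side is counting measure $dm$, a misplaced factor of $q^\alpha$ or $|V|$ would corrupt the exponents. Once those bookkeeping issues are pinned down, both inequalities drop out immediately from the two test functions, and no deeper structural input about $V$ is needed beyond $|V| = q^s$ and the existence of the affine subspace $H$ of size $q^k$.
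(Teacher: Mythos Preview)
Your argument is correct and is precisely the standard two-test-function proof that the paper defers to \cite{MT04}: the paper gives no proof of its own beyond the citation, and your Step~2 (testing on $f=1_H$) is exactly the computation Mockenhaupt--Tao carry out. Your Step~1 via Plancherel plus H\"older is a clean variant of the usual argument for the first necessary condition; the only wobble is the handling of $r<2$, which as written is circular (``the bound would be even stronger'' presupposes the conclusion). The clean fix is one line: for $r\le 2$ on counting measure one has $\|(d\sigma)^\vee\|_{L^r(dm)}\ge \|(d\sigma)^\vee\|_{L^2(dm)}=q^{(\alpha-s)/2}\to\infty$, so no $r<2$ can work. With that patch the proof is complete.
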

\begin{proof}
The proof of the above lemma can be found on  pages 41--42 in \cite{MT04}. 
\end{proof}

\subsection{Necessary conditions for the bound of $R^*_{\mathcal F}(p\to r)$}
From now on we always assume that the flat disk $\mathcal{F}$ is the variety lying in $\mathbb F_q^n$ with $n=2d\ge 4$ even integer.
\begin{lemma}\label{NecC}  Suppose that $R^*_{\mathcal{F}}(p\to r)\lesssim 1$ for some $1\le p,r \le \infty.$ Then we have
$$ r \ge \frac{2n}{n-2} \quad \mbox{and}\quad r \ge \frac{p(n+2)}{(p-1)(n-2)}.$$
\end{lemma}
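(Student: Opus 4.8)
The plan is to derive this as a special case of the Mockenhaupt--Tao necessary condition, Lemma \ref{MTNe}, applied to $V = \mathcal{F}$ in $\mathbb{F}_q^\alpha$ with $\alpha = n$. First I would record the size of the variety: as noted in the introduction, $|\mathcal{F}| = q^{n-2}$, so in the notation of Lemma \ref{MTNe} we take $s = n-2$. Substituting $s = n-2$ into the first conclusion $r \ge \tfrac{2\alpha}{s}$ of that lemma immediately gives $r \ge \tfrac{2n}{n-2}$, which is the first inequality claimed.

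For the second inequality, the key step is to exhibit an affine subspace contained in $\mathcal{F}$ of the largest possible dimension. Looking at the defining form \eqref{DefFlat}, fix $\alpha = \mathbf{0} \in \mathbb{F}_q^{d-1}$; then $\alpha \cdot \alpha = 0$ and $\alpha \cdot \beta = 0$ for every $\beta$, so the set $H := \{(\mathbf{0}, 0, \beta, 0) : \beta \in \mathbb{F}_q^{d-1}\}$ is an affine (indeed linear) subspace of dimension $d-1$ sitting inside $\mathcal{F}$. Hence $|H| = q^{d-1}$, and we take $k = d-1$ in Lemma \ref{MTNe}. Since $n = 2d$, we have $k = d - 1 = \tfrac{n-2}{2}$, and also $n - k = 2d - (d-1) = d+1 = \tfrac{n+2}{2}$ and $s - k = (n-2) - \tfrac{n-2}{2} = \tfrac{n-2}{2}$. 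Plugging these into the second conclusion $r \ge \tfrac{p(\alpha-k)}{(p-1)(s-k)}$ of Lemma \ref{MTNe} yields
$$ r \ge \frac{p \cdot \frac{n+2}{2}}{(p-1)\cdot \frac{n-2}{2}} = \frac{p(n+2)}{(p-1)(n-2)},$$
which is the second claimed inequality.

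The only thing requiring a small check is that the hypotheses of Lemma \ref{MTNe} are genuinely met, namely that $0 < s < \alpha$ and $0 < k \le s$: here $0 < n-2 < n$ since $n \ge 4$, and $0 < d-1 \le n-2$ since $d \ge 2$. I would also want to confirm that $H$ really is a maximal-dimension subspace in $\mathcal{F}$ so that this is the strongest necessary condition obtainable this way — but for the purpose of proving the stated lemma, only the existence of $H$ with $|H| = q^{d-1}$ is needed, so this observation is not strictly required. The main (and essentially only) obstacle is spotting the right affine subspace $H$; once one notices that setting the $\alpha$-block to zero kills both quadratic-type constraints simultaneously, the rest is arithmetic substitution into Lemma \ref{MTNe}.
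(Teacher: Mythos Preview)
Your proof is correct and follows essentially the same approach as the paper: both identify the subspace $H=\{(\mathbf{0},0,\beta,0):\beta\in\mathbb F_q^{d-1}\}\subset\mathcal F$ and then apply Lemma~\ref{MTNe} with $\alpha=n$, $s=n-2$, $k=\tfrac{n-2}{2}$. Your write-up in fact spells out the arithmetic substitutions and the hypothesis checks a bit more explicitly than the paper does.
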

\begin{proof} Since $\mathcal{F}$ is contained in $\mathbb F_q^n$, it is not hard to see that  the cardinality of $\mathcal{F}$ is $q^{n-2}$.  We note that when $n=2d, d\ge 2,$ the flat disk $\mathcal{F}$ is the set of common solutions of the following equations:
$$ x_d=x_1^2+x_2^2+\cdots + x_{d-1}^2, \quad  x_{2d}= x_1x_{d+1} +x_2x_{d+2} + \cdots+ x_{d-2}x_{2d-2}+ x_{d-1} x_{2d-1}.$$ 

Setting $H=\{{\bf 0}\}\times \mathbb F_q^{d-1}\times \{0\} \subset  \mathbb F_q^d \times \mathbb F_q^{d-1} \times \mathbb F_q,$ it is easily seen that $H$ satisfies that $|H|=q^{d-1}=q^{\frac{n-2}{2}},$ and  is a subspace lying on the flat disk $\mathcal{F}.$ Hence, invoking Lemma \ref{MTNe} with taking   $\alpha=n, s=n-2,$ and $k=(n-2)/2$,   we obtain the required necessary conditions for  the estimate $R^*_{\mathcal{F}}(p\to r)\lesssim 1.$
\end{proof}
It can be conjectured that the above necessary conditions are in fact sufficient conditions for the bound of $R^*_{\mathcal{F}}(p\to r).$ In other words,  we conjecture the following statement.

\begin{conjecture} \label{Conj} If $(1/p, 1/r)$ is contained in the convex hull of points $(0, 0)  (0, \frac{n-2}{2n}),  (\frac{n-2}{2n},  \frac{n-2}{2n}), $ and $(1,0),$ then  
$$ R^*_{\mathcal{F}}(p\to r) \lesssim 1.$$
\end{conjecture}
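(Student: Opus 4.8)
\emph{Reduction to a single endpoint.} First I would observe that the whole conjectured region collapses to one critical estimate, as already recorded in the discussion after Theorem~\ref{main3}. The region is the convex hull of $A=(0,0)$, $B=(0,\frac{n-2}{2n})$, $C=(\frac{n-2}{2n},\frac{n-2}{2n})$ and $D=(1,0)$. The vertices $A$ and $D$ are the trivial estimates $R^*_{\mathcal{F}}(\infty\to\infty)\lesssim 1$ and $R^*_{\mathcal{F}}(1\to\infty)\lesssim 1$; the vertex $B$ follows from $C$ by nesting in $p$ (Remark~\ref{rem1}), since $R^*_{\mathcal{F}}(\infty\to\frac{2n}{n-2})\le R^*_{\mathcal{F}}(\frac{2n}{n-2}\to\frac{2n}{n-2})$; and the point $\left(\frac{1}{2},\frac{n-2}{2n+4}\right)$, which lies on the edge $\overline{CD}$, is already supplied by Theorem~\ref{mainI}. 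Hence, by the interpolation theorem (Theorem~\ref{tmR}), Conjecture~\ref{Conj} is equivalent to the single endpoint bound
\[
R^*_{\mathcal{F}}\!\left(\tfrac{2n}{n-2}\to\tfrac{2n}{n-2}\right)=R^*_{\mathcal{F}}\!\left(\tfrac{2d}{d-1}\to\tfrac{2d}{d-1}\right)\lesssim 1,
\]
equivalently, by duality \eqref{Duality}, to $\|\widehat{g}\|_{L^{2n/(n+2)}(\mathcal{F},d\sigma)}\lesssim\|g\|_{L^{2n/(n+2)}(\mathbb F_q^{n},dm)}$. Everything up to this point is routine.

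\emph{Attacking the endpoint.} The approach I would take exploits the ruled structure coming from \eqref{defFeq}: for each $\alpha\in\mathbb F_q^{d-1}$ the set $\Pi_\alpha:=\{(\alpha,||\alpha||,\beta,\alpha\cdot\beta):\beta\in\mathbb F_q^{d-1}\}$ is an affine $(d-1)$-plane, $\mathcal{F}=\bigsqcup_{\alpha}\Pi_\alpha$ is the disjoint union of $q^{d-1}$ of them, and $(fd\sigma)^\vee=\sum_{\alpha}(f_\alpha d\sigma_\alpha)^\vee$, where $f_\alpha$ is the restriction of $f$ to $\Pi_\alpha$ and $d\sigma_\alpha$ is surface measure on $\Pi_\alpha$. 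Each summand $(f_\alpha d\sigma_\alpha)^\vee$ is concentrated on the affine subspace of frequencies dual to $\Pi_\alpha$, whose direction varies with $\alpha$; projecting the first $d$ coordinates onto the paraboloid $P\subset\mathbb F_q^{d}$ of \eqref{DefP}, these directions are governed by the location of $(\alpha,||\alpha||)$ on $P$. The plan is then to bound $\big\|\sum_\alpha(f_\alpha d\sigma_\alpha)^\vee\big\|_{L^{2d/(d-1)}}$ by combining (i) the maximal Kakeya estimate over $\mathbb F_q^{d}$ of Ellenberg--Oberlin--Tao \cite{EOT10}, which controls the overlap of the dual frequency tubes, with (ii) the sharp $L^2\to L^r$ restriction theory for $P$ (Theorems~\ref{restEven} and~\ref{restOdd}), which controls each individual fibre --- this being the Mockenhaupt--Tao mechanism made quantitative in Proposition~\ref{LemFormula} --- but arranged through a bilinear, transversal-versus-tangential case division between families of planes $\Pi_\alpha$ whose directions are separated on $P$ and families whose directions cluster, rather than feeding the paraboloid estimate in as a single black box. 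The explicit Fourier transform of $d\sigma$ from Proposition~\ref{Promain}, together with its six-piece decomposition, would be used to keep the Gauss-sum weights that appear in $(f_\alpha d\sigma_\alpha)^\vee$ under control.

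\emph{The main obstacle.} The hard part is, I expect, genuinely beyond the plan as stated. As the paper records in Remark~\ref{rem53}, inserting even the \emph{optimal} $L^2\to L^r$ estimate for $P$ into Proposition~\ref{LemFormula} does not reach the endpoint; the deficit is produced precisely by the $q^{d-1}$ planes $\Pi_\alpha$, which saturate the necessary conditions of Lemma~\ref{NecC} and push the problem onto the corner $p=r=\frac{2d}{d-1}$. There the exponent $\frac{2d}{d-1}$ is not an even integer unless $d=2$, so the ``expand the $L^r$ norm and count solutions of \eqref{defFeq}'' argument that resolves $\mathcal{F}\subset\mathbb F_q^{4}$ (the first part of Theorem~\ref{main2}) is unavailable, and $L^2$-orthogonality --- the Stein--Tomas mechanism of Lemma~\ref{STA} --- is useless at a value of $p$ other than $2$. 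What seems to be required is a sharp estimate that simultaneously sees the Kakeya geometry of the plane directions and the curvature of $P$: for instance a sharp bilinear or multilinear extension inequality for $\mathcal{F}$, or a sharp point--variety incidence bound for the system \eqref{defFeq}; no such tool is presently available over finite fields. I therefore expect the endpoint, and with it Conjecture~\ref{Conj}, to require a genuinely new method, and I would regard proving such an estimate as the real content of the problem.
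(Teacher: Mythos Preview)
The statement you were asked to prove is a \emph{conjecture}: the paper does not prove it, and explicitly records (in the remark following Theorem~\ref{main3} and in Remark~\ref{rem53}) that it remains open for all $n\ge 6$. So there is no ``paper's own proof'' to compare against.

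Your reduction to the single endpoint $R^*_{\mathcal{F}}(\tfrac{2n}{n-2}\to\tfrac{2n}{n-2})\lesssim 1$ is correct and is exactly the observation the paper makes in the first bullet after Theorem~\ref{main3}. Your subsequent discussion is not a proof but an honest outline of why the available machinery---Proposition~\ref{LemFormula} fed by the sharp Kakeya maximal estimate and the best known (even the conjecturally optimal) $L^2\to L^r$ paraboloid estimates---cannot reach that endpoint; this matches the paper's own assessment in Remark~\ref{rem53} and the closing remark of Section~1. Your identification of the obstruction (the $q^{d-1}$ affine $(d-1)$-planes $\Pi_\alpha$ saturating Lemma~\ref{NecC}, the non-even exponent $\tfrac{2d}{d-1}$ for $d\ge 3$, and the inapplicability of $L^2$-based methods at $p\ne 2$) is accurate.

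In short: you have not proved the conjecture, but neither has the paper; your write-up correctly reduces the problem, correctly diagnoses why the paper's methods stop short, and correctly concludes that a genuinely new idea is needed. That is the state of the art.
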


\begin{remark}\label{sharpexa}Theorem \ref{mainI} shows that  the above conjecture  is true  when the $p$ index is 2. Indeed,  letting $x=1/p,  y=1/r$,  the line segment passing through  $(1, 0)$ and $ (\frac{n-2}{2n}, \frac{n-2}{2n})$ is given by the equation
$$ y= \frac{-(n-2)x}{n+2} + \frac{n-2}{n+2}, \quad \left(\frac{n-2}{2n}\le x\le 1\right).$$
Hence, when $x=1/p=1/2,$   it follows by a direct computation that   $y=1/r=\frac{n-2}{2n+4},$ which is exactly corresponding to Theorem \ref{mainI}. \end{remark}

\subsection{The standard Gauss sum}

One of the ingredients to prove Theorem \ref{mainI}  will be to  deduce an explicit Fourier transform of the surface measure $d\sigma$ on the flat disk $\mathcal{F}$ in $\mathbb F_q^n.$  To do this,  by means of the Fourier analysis over finite fields, we shall reduce the matter to the estimate of an explicit Gauss sum. Here we review  basics for the Gauss sum.\\

We begin by reviewing the definition of the canonical additive character of $\mathbb F_q$, which is given in \cite{LN97}.
Let $p$ be the characteristic of $\mathbb F_q$ with $q=p^\ell$ for some positive integer $\ell.$ The absolute trace function  $Tr$ is a function from $\mathbb F_q$ to $\mathbb F_p,$ defined by
$$ Tr(t)=t+ t^p+ t^{p^2} + \cdots + t^{p^{\ell-1}}.$$
 It is shown in  Section 3  of \cite{LN97} that  the absolute trace function is well-defined.
\begin{definition} [Canonical additive character and the quadratic character, \cite{LN97}] \label{def1}
 The function $\chi$ defined by
 $$\chi(c)=e^{2\pi i Tr(c)/p} \quad \mbox{for}~~ c\in \mathbb F_q$$
 is called the canonical additive character of $\mathbb F_q.$  On the other hand,  the multiplicative character $\eta$ is a function from $\mathbb F_q^* \to \mathbb{R},$ defined by
 $$ \eta(a)=\left\{ \begin{array}{ll} 1 \quad &\mbox{if}~a ~\mbox{is a square number of}~ \mathbb F_q^*,\\
                                                      -1 \quad &\mbox{if}~ a ~\mbox{is not a square number of }~ \mathbb F_q^*.\end{array}\right.$$
\end{definition}

It is known that  $ \eta(-1)=-1$ if and only if  $q\equiv 3 \pmod{4},$  and   $\eta(-1)=1$ if and only if $q\equiv 1 \pmod{4}$ (for example, see Remark 5.13, \cite{LN97}).\\

The orthogonality of characters $\chi$ and $\eta$ states that
$$\sum_{t\in \mathbb F_q} \chi(at)=\left\{\begin{array}{ll}  q \quad\mbox{if}~~ a=0,\\
                                                                                           0 \quad\mbox{if}~~ a\ne 0,\end{array}\right.
                \quad \mbox{and}\quad \sum_{t\in \mathbb F_q^*} \eta(at)=0  \quad\mbox{if}~~ a\ne 0.$$

  The standard Gauss sum determined by $\chi$ and $\eta$  is defined by
  $$\mathcal{G}=\mathcal{G}(\eta, \chi):= \sum_{t\in \mathbb F_q^*} \eta(t) \chi(t).$$
 We will invoke the following well-known property of the standard Gauss sum.  Here we provide an elementary proof.
 \begin{lemma}\label{SGa} We have
 $$ \mathcal{G}^2=\mathcal{G}(\eta, \chi)^2=\eta(-1) q.$$
  \end{lemma}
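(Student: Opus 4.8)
The plan is to compute $\mathcal{G}^2$ directly by expanding the product of two copies of the defining sum and reorganizing the resulting double sum via a change of variables, reducing everything to the orthogonality relations for $\chi$ and $\eta$ already recorded in the excerpt. First I would write
$$\mathcal{G}^2 = \sum_{s\in \mathbb F_q^*}\sum_{t\in \mathbb F_q^*} \eta(s)\eta(t)\,\chi(s+t) = \sum_{s\in \mathbb F_q^*}\sum_{t\in \mathbb F_q^*} \eta(st)\,\chi(s+t),$$
using that $\eta$ is multiplicative. The natural substitution is $t = su$ for $u\in \mathbb F_q^*$ (legitimate since $s\ne 0$), which turns $\eta(st)$ into $\eta(s^2 u) = \eta(u)$ and $s+t$ into $s(1+u)$, giving
$$\mathcal{G}^2 = \sum_{u\in \mathbb F_q^*}\eta(u)\sum_{s\in \mathbb F_q^*}\chi\big(s(1+u)\big).$$

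Next I would evaluate the inner sum over $s$. By the orthogonality relation for $\chi$, $\sum_{s\in \mathbb F_q}\chi(s(1+u)) = q$ when $u=-1$ and $0$ otherwise; subtracting the $s=0$ term gives $\sum_{s\in \mathbb F_q^*}\chi(s(1+u)) = q-1$ when $u=-1$ and $-1$ when $u\ne -1$. Substituting this back splits the $u$-sum into the single term $u=-1$ and the rest:
$$\mathcal{G}^2 = \eta(-1)(q-1) + (-1)\sum_{u\in \mathbb F_q^*,\, u\ne -1}\eta(u) = \eta(-1)(q-1) - \left(\sum_{u\in \mathbb F_q^*}\eta(u) - \eta(-1)\right).$$
Since $\sum_{u\in \mathbb F_q^*}\eta(u) = 0$ by the orthogonality relation for $\eta$ (with $a=1$), the bracket collapses to $-\eta(-1)$, and we get $\mathcal{G}^2 = \eta(-1)(q-1) + \eta(-1) = \eta(-1)q$, as claimed.

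I do not anticipate a genuine obstacle here; the argument is a short classical manipulation. The only point requiring a small amount of care is the bookkeeping when passing between sums over $\mathbb F_q$ and over $\mathbb F_q^*$ — isolating the $s=0$ and $u=-1$ terms correctly — but this is routine. One could alternatively present the computation slightly more symmetrically by first extending the $t$-sum to all of $\mathbb F_q$ after the substitution and tracking the correction term, but the version above keeps every step tied directly to the two orthogonality identities quoted in the text, so I would present it that way.
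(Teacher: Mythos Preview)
Your argument is correct. Both your proof and the paper's proceed by expanding the double sum, applying a multiplicative change of variables, and finishing with the orthogonality relations for $\chi$ and $\eta$. The only structural difference is that the paper first observes $\mathcal{G}=\eta(-1)\overline{\mathcal{G}}$ (whence $\mathcal{G}^2=\eta(-1)|\mathcal{G}|^2$) and then computes $|\mathcal{G}|^2$ via the substitution $b=at^{-1}$ in $\sum_{a,t\ne 0}\eta(at^{-1})\chi(a-t)$, whereas you attack $\mathcal{G}^2$ directly with the substitution $t=su$. Your route is marginally more economical since it skips the intermediate reduction to $|\mathcal{G}|^2$; the paper's route has the minor side benefit of isolating the fact $|\mathcal{G}|=q^{1/2}$ explicitly. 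In substance the two are the same classical computation.
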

\begin{proof}
Since $\eta =\overline{\eta}$ and $\overline{\chi(t)}= \chi(-t),$   it is seen by a change of variables that
$$ \mathcal{G}(\eta, \chi)=\eta(-1) \overline{\mathcal{G}(\eta, \chi)}.$$ 
Hence,  $\mathcal{G}(\eta, \chi)^2= \eta(-1) |\mathcal{G}(\eta, \chi)|^2,$ so the problem is reduced to showing that $|\mathcal{G}(\eta, \chi)|^2=q.$
Indeed,  it follows that
$$|\mathcal{G}(\eta, \chi)|^2= \left(\sum_{a\ne 0} \eta(a)\chi(a)\right) \left(\sum_{t\ne 0} \overline{\eta(t) \chi(t)} \right) =\sum_{a, t\ne 0} \eta(at^{-1}) \chi(a-t).$$
By a change of variables,  letting $b=at^{-1}$ for any fixed $t\ne 0,$ 
$$=\sum_{t\ne 0} \sum_{b\ne 0} \eta(b) \chi( (b-1)t)=\sum_{b\ne 0} \eta(b) \left( -1+ \sum_{t\in \mathbb F_q} \chi((b-1))t\right).$$
By the orthogonality of $\chi$ and $\eta$,  we obtain the required estimate
$ |\mathcal{G}(\eta, \chi)|^2=q,$ where we also used the simple fact that $\eta(1)=1.$ This completes the proof of the lemma.
\end{proof}

It is not hard to note that  for any non-zero  $a\in \mathbb F_q^*,$ 
$$ \sum_{t\in \mathbb F_q} \chi(at^2)=\eta(a) \mathcal{G}.$$
Completing the square and using a simple change of variables,  the above formula can be generalized to the formula below:
For any non-zero $a\in \mathbb F_q^*$ and  any $b\in \mathbb F_q,$   we have
\begin{equation}\label{CSQ} \sum_{t\in \mathbb F_q} \chi(at^2+bt) = \eta(a) \mathcal{G} \chi\left(\frac{b^2}{-4a}\right).\end{equation}

\subsection{Ellenberg-Oberlin-Tao  Kakeya maximal theorem over finite fields} 
We review the connection between the restriction estimate for the flat disk $\mathcal{F}$ and the Kakeya maximal estimate over finite fields. 
To deduce Theorem \ref{main2} and Theorem \ref{main3}, we will heavily use  the connection as well as recently established restriction estimates for paraboloids in $\mathbb F_q^d.$ \\

We begin with some notation  related to the Kakeya maximal problem over finite fields.
Consider a direction $v\in \mathbb F_q^{d-1}, d\ge 2,$ and a vector $z_0\in \mathbb F_q^{d-1}.$ Then  the line $l(z_0, v)$ in $\mathbb F_q^d$ is defined by 
$$ l(z_0, v):= \{ (z_0+ t v,  t): t\in \mathbb F_q\} \subset \mathbb F_q^d.$$
For a function $h$ on $\mathbb F_q^d$,  the Kakeya maximal function $h^*$ is defined on $\mathbb F_q^{d-1},$ the space of directions:
$$  h^*(v):= \max_{z_0\in \mathbb F_q^{d-1}}  \sum_{\eta\in l(z_0, v)} |h(\eta)|.$$
Let $dv$ denote the normalized surface measure on the space of directions, which assigns $q^{-(d-1)}$ to each point in $\mathbb F_q^{d-1}.$\\

For $1\le p, r\le \infty,$   we define $K(p\to r)$ to be the smallest constant such that   the estimate
\begin{equation}\label{Kdef} || h^*||_{L^r(\mathbb F_q^{d-1}, dv)} \le K(p\to r)  ||h||_{L^p(\mathbb F_q^d, dm)}\end{equation}
holds for all functions $h$ on the space $\mathbb F_q^d$ with the counting measure $dm.$
The Kakeya maximal problem over finite fields is to determine all exponents $1\le p, r\le \infty$ such that 
$$ K(p\to r)\lesssim 1.$$
This problem was initially posed by Mockenhauput and Tao \cite{MT04}, and was settled by Ellenberg, Oberlin, and Tao \cite{EOT10}, who ingenuously applied the polynomial method of Dvir \cite{Dv09}. More precisely, they proved the following critical estimate.
\begin{theorem} [Ellenberg-Oberlin-Tao, \cite{EOT10}]\label{Kdd}
Let $K(p\to r)$ be defined as in \eqref{Kdef}. Then we have
$$ K(d\to d)\lesssim 1.$$
\end{theorem}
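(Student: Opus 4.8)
The final statement is Theorem (Ellenberg-Oberlin-Tao): $K(d \to d) \lesssim 1$, i.e., the finite-field Kakeya maximal estimate at the critical exponent. Here is how I would approach it.

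\medskip

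\textbf{Plan of proof.} The plan is to reduce the $L^d \to L^d$ maximal bound to a pointwise counting statement about line sets and then apply the polynomial method of Dvir. First I would observe that by interpolation with the trivial bound $K(\infty \to \infty) \lesssim 1$ (every Kakeya line has $q$ points, so $h^*(v) \le q \|h\|_\infty$, and the normalized measure $dv$ has total mass $1$) it would suffice in principle to get a weaker $L^p$ estimate with good dependence, but the cleanest route is to prove the $L^d \to L^d$ bound directly. By duality and the standard linearization of the maximal operator, it is enough to show: for every choice of a direction-selecting map $v \mapsto z_0(v)$, writing $\ell_v := l(z_0(v), v)$ for the resulting family of $q^{d-1}$ lines (one per direction, hence pairwise pointing in distinct directions), and letting $P := \bigcup_v \ell_v$ be their union, one has $|P| \gtrsim q^d$. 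Indeed, once $|P| \gtrsim q^d$ is known, a routine $L^2$-type or direct combinatorial argument bounds $\|h^*\|_{L^d(dv)}^d = q^{-(d-1)} \sum_v \big(\sum_{\eta \in \ell_v}|h(\eta)|\big)^d$ in terms of $\sum_{\eta \in P}|h(\eta)|^d \le \|h\|_{L^d(dm)}^d$, after using that the $q^{d-1}$ lines meet any fixed point in a controlled number of directions (a line through a point lies in at most one of our chosen directions per... actually at most $q$ directions, but the Kakeya-type bound on incidences is what does the work). The heart is therefore the lower bound $|P| \gtrsim q^d$.

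\medskip

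\textbf{The polynomial method step.} To prove $|P| \gtrsim q^d$ where $P \subseteq \mathbb F_q^d$ is a set containing a line in every one of the $q^{d-1}$ directions of the form $(v,1)$: suppose for contradiction that $|P|$ is too small, specifically smaller than $\binom{N+d}{d}$ for a suitable $N \approx c q$ with $c<1$. Then a dimension count produces a nonzero polynomial $g \in \mathbb F_q[x_1,\dots,x_d]$ of degree at most $N$ vanishing on all of $P$. For each direction $v$, the restriction of $g$ to the line $\ell_v$ is a univariate polynomial of degree $\le N < q$ vanishing at all $q$ points of $\ell_v$, hence is identically zero; so the top-degree homogeneous part $g_{\mathrm{top}}$ of $g$, evaluated at the direction vector $(v,1)$, vanishes for every $v \in \mathbb F_q^{d-1}$. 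That is a polynomial in $v$ of degree $\le N < q$ vanishing on all of $\mathbb F_q^{d-1}$, forcing $g_{\mathrm{top}} \equiv 0$ on the affine chart, contradicting that $g_{\mathrm{top}}$ is the nonzero leading form. This contradiction forces $|P| \ge \binom{N+d}{d} \gtrsim q^d$ with the implicit constant depending only on $d$ (through the choice of $c$ and the binomial coefficient asymptotics), which is exactly what is needed.

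\medskip

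\textbf{Main obstacle.} The conceptually delicate point—and the step I would be most careful about—is the passage from the set lower bound $|P| \gtrsim q^d$ back to the maximal inequality $K(d\to d)\lesssim 1$, because the maximal function squares the contribution of heavy points: one must control $\sum_v \big(\sum_{\eta\in\ell_v}|h(\eta)|\big)^d$, and the obvious Cauchy-Schwarz/Hölder expansion introduces an incidence quantity $\sum_{\eta} (\#\{v: \eta\in\ell_v\})^{\text{something}}$ that is not immediately bounded by the bush estimate. The right way is to apply the Dvir-type argument not merely to $P$ but in a weighted/level-set form: run the polynomial argument on the superlevel sets $\{h^* > \lambda\}$ of directions, deducing that the union of the corresponding lines is large, and then sum dyadically over $\lambda$. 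This is precisely the refinement in \cite{EOT10}; the bookkeeping of the dyadic sum and verifying that the exponents close at $p=r=d$ (rather than at some worse pair) is where the real work lies, and one should expect to invoke Theorem \ref{Kdd}'s statement form—$K(d\to d)\lesssim 1$—as the sharp endpoint that the polynomial-method dimension count exactly delivers, with everything else following by interpolation and the nesting properties already recorded in Remark \ref{rem1}.
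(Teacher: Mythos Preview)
The paper does not prove this theorem; it is quoted from \cite{EOT10} as a black-box input and invoked only in the proof of Proposition~\ref{LemFormula}. There is thus no ``paper's own proof'' to compare your sketch against.

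That said, your outline has a genuine gap at exactly the point you yourself flag. You correctly reproduce Dvir's polynomial argument for the Kakeya \emph{set} bound $|P|\gtrsim q^d$, and you correctly identify that upgrading this to the \emph{maximal} inequality is the real content; but you do not carry that step out---you first call it ``routine,'' then retract that, and finally write ``this is precisely the refinement in \cite{EOT10},'' which is circular. More to the point, the mechanism you propose (run the plain Dvir argument on the direction superlevel sets $\Omega_\lambda=\{v:h^*(v)>\lambda\}$ and sum dyadically) does not close at $p=r=d$. For $h=1_E$, the basic polynomial argument (a nonzero polynomial of degree $N\sim|E|^{1/d}$ vanishing on $E$, then Schwartz--Zippel applied to its leading form on the hyperplane of directions) yields only $|\Omega_\lambda|\lesssim |E|^{1/d}q^{d-2}$ once $\lambda\gtrsim|E|^{1/d}$, a bound independent of $\lambda$; integrating $\lambda^{d-1}|\Omega_\lambda|$ from $|E|^{1/d}$ to $q$ then produces a term of order $|E|^{1/d}q^{2d-2}$, which swamps the target $q^{d-1}|E|$ unless $|E|\gtrsim q^d$. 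The argument in \cite{EOT10} instead establishes the dual incidence bound
\[
\sum_{x\in\mathbb F_q^d}\bigl(\#\{v\in\Omega:x\in\ell_v\}\bigr)^{d/(d-1)}\ \lesssim\ q\,|\Omega|
\]
directly, via the polynomial method with high-order vanishing (after Saraf--Sudan and Dvir--Kopparty--Saraf--Sudan). That quantitative strengthening, not a dyadic repackaging of the plain set bound, is the missing ingredient in your plan.
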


Here, we refer the readers  to Lewko's paper \cite{Le14}, which indicates how  the sharp maximal kakeya estimate can be used to  deduce the restriction results  beyond the Stein-Tomas result for the paraboloid, where $-1$ is a square in $\mathbb F_q.$\\ 

It turned out that there is a strong connection among  the restriction estimate for $\mathcal{F}$ in $\mathbb F_q^{2d}$,  the restriction estimate for the paraboloid $P$ in $\mathbb F_q^d,$ and the Kakeya maximal estimate in $\mathbb F_q^d.$
\begin{theorem} [Theorem 9.1, \cite{MT04}]\label{PFK}
Let $P, \mathcal{F},$ and $K$ denote the paraboloid in $\mathbb F_q^d, d\ge 2,$   the flat disk in $\mathbb F_q^{2d},$ and  the Kakeya operator in $\mathbb F_q^d, respectively.$  Then,  for $ 2\le p, r \le \infty,$  we have
$$ R^*_{\mathcal{F}}(p\to r) \le R^*_{P}(2 \to r)  K\left( \left(\frac{r}{2}\right)' \to \left(\frac{p}{2}\right)' \right) ^{1/2}.$$
\end{theorem}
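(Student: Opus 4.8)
The plan is to start from the extension formulation. Fix a function $f$ on $\mathcal{F} \subset \mathbb{F}_q^{2d}$ and estimate $\|(fd\sigma)^\vee\|_{L^r(\mathbb{F}_q^{2d},dm)}$. The key structural observation is that a point of $\mathcal{F}$ is determined by $(\alpha,\beta) \in \mathbb{F}_q^{d-1}\times\mathbb{F}_q^{d-1}$ via the parametrization in \eqref{DefFlat}, and the crucial feature is that the last coordinate $\alpha\cdot\beta$ is \emph{linear in $\beta$ for each fixed $\alpha$}. Writing the frequency variable in $\mathbb{F}_q^{2d}$ as $(m,m_d,m',m_{2d})$ matching the block structure of \eqref{DefFlat}, I would first carry out the sum over $\beta$ in $(fd\sigma)^\vee$. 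Since $\chi$ evaluated on the $\beta$-dependent part is $\chi(\beta\cdot m' + (\alpha\cdot\beta)m_{2d}) = \chi(\beta\cdot(m' + m_{2d}\alpha))$, this inner sum, for each fixed $\alpha$, reproduces (up to normalization) the extension operator of the paraboloid $P = \{(\alpha,\alpha\cdot\alpha)\} \subset \mathbb{F}_q^d$ acting on the slice $f(\alpha,\cdot)$, but \emph{evaluated at the shifted point} $(m + m_{2d}\alpha,\ m_d)$. In other words, after the $\beta$-summation, $(fd\sigma)^\vee(m,m_d,m',m_{2d})$ becomes a sum over $\alpha$ of the form $\sum_\alpha (f_\alpha d\sigma_P)^\vee(\text{line through }(m,m_d)\text{ in direction determined by }m_{2d})$, i.e.\ a sum along a line in $\mathbb{F}_q^d$ whose direction is governed by $\alpha$ and whose position is pinned by $(m,m_d,m_{2d})$. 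This is exactly the geometric mechanism that brings in the Kakeya operator.

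Next I would take $L^r$ norms. Splitting the $2d$ frequency variables as the $d$-tuple $(m,m_d)$ — call it the \emph{paraboloid variable} — together with the $d$-tuple $(m',m_{2d})$, the Kakeya-type sum over $\alpha$ along lines should be dualized: for fixed $(m',m_{2d})$, the function $(m,m_d)\mapsto (fd\sigma)^\vee(\cdots)$ is a sum over directions $\alpha\in\mathbb{F}_q^{d-1}$ of translates of paraboloid-extension pieces along lines, and its $L^r$ norm in $(m,m_d)$ is controlled by the Kakeya maximal inequality \eqref{Kdef}. Concretely, I expect to apply the Kakeya estimate with the source function being $v\mapsto \|(f_v d\sigma_P)^\vee\|$ (a function on the space of directions $\mathbb{F}_q^{d-1}$, after absorbing the $(m',m_{2d})$ variable appropriately) and then Minkowski/H\"older to reassemble. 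The exponents in the statement, namely $K\bigl((r/2)'\to(p/2)'\bigr)^{1/2}$, strongly suggest that one squares the extension operator (working with $|(fd\sigma)^\vee|^2$, so that $L^r$ becomes $L^{r/2}$ of the square) before invoking Kakeya — this is the standard device, as in Mockenhaupt--Tao, of converting a bilinear/extension estimate into a maximal estimate, which explains both the factor $\tfrac12$ in the exponent and the appearance of the conjugate exponents $(r/2)'$ and $(p/2)'$. The paraboloid factor $R^*_P(2\to r)$ then enters because each slice $f_\alpha$ is handled in $L^2(d\sigma_P)$, which is why the hypothesis $p\ge 2$ (so that $L^2(d\sigma)$ norms of slices can be summed against an $L^{(p/2)'}$ weight) is needed.

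I would organize the computation as: (i) factor $(fd\sigma)^\vee$ through the $\beta$-sum to expose the paraboloid extension operator on slices; (ii) take $|\cdot|^2$ and pass to $L^{r/2}$, writing the result as a pairing against an arbitrary nonnegative $L^{(r/2)'}$ function $g$ on the $(m,m_d)$-space (times the trivial $(m',m_{2d})$ average); (iii) recognize the $\alpha$-sum as an integral of $g$ along lines in $\mathbb{F}_q^d$, i.e.\ bound it by $g^*(\alpha)$, the Kakeya maximal function; (iv) apply H\"older in $\alpha$ between the paraboloid $L^2$ pieces $\|(f_\alpha d\sigma_P)^\vee\|_{L^r}^2 \le R^*_P(2\to r)^2\|f_\alpha\|^2_{L^2(d\sigma_P)}$ and $g^*$, using the Kakeya bound $\|g^*\|_{L^{(p/2)'}}\le K\bigl((r/2)'\to(p/2)'\bigr)\|g\|_{L^{(r/2)'}}$; (v) collect $\sum_\alpha \|f_\alpha\|^2_{L^2(d\sigma_P)} \sim \|f\|^2_{L^2(d\sigma)}$ and undo the squaring to land at $\|f\|_{L^p(d\sigma)}$ — here the passage from $\|f\|_{L^2}$ of the full function to $\|f\|_{L^p}$ of the full function again uses $p\ge 2$ together with how the slices were paired. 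The main obstacle I anticipate is bookkeeping the several normalizations (the $q^2$ in $d\sigma$, the factors $q^{-(d-1)}$ in $d\sigma_P$ and in $dv$, and the counting vs.\ normalized measures on the two halves of $\mathbb{F}_q^{2d}$) so that the powers of $q$ cancel exactly and leave a genuinely $q$-independent constant; a secondary subtlety is making precise the claim in step (iii) that the $\alpha$-sum is \emph{exactly} an integral along a line with direction $\alpha$ and free base point, so that the full Kakeya maximal function (maximum over base points) legitimately dominates it after one has frozen the variables that are not being maximized. Once the dictionary between $\beta$-summation $\leftrightarrow$ paraboloid extension and $\alpha$-summation $\leftrightarrow$ line integrals is set up cleanly, the inequality follows by the chain of H\"older steps described above.
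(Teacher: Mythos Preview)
The paper does not prove this statement; it is quoted as Theorem~9.1 of \cite{MT04} and invoked as a black box in Proposition~\ref{LemFormula}. There is therefore no proof in the present paper to compare against.

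On the substance of your outline, the mechanism you describe (paraboloid extension on one block of variables, Kakeya maximal estimate on the other, glued via squaring and duality) is indeed the Mockenhaupt--Tao argument, but you have the roles of $\alpha$ and $\beta$ reversed. The quadratic phase on $\mathcal{F}$ is $\alpha\cdot\alpha$, so it is the sum over $\alpha$ (for each fixed $\beta$) that yields the paraboloid extension: with $F_\beta(\alpha):=f(\alpha,\beta)$ one has
\[
(fd\sigma)^\vee(\mu,m_d,\mu',m_{2d})=\frac{1}{q^{d-1}}\sum_{\beta\in\mathbb{F}_q^{d-1}}\chi(\mu'\cdot\beta)\,(F_\beta\,d\sigma_P)^\vee(\mu+m_{2d}\beta,\,m_d).
\]
Your ``inner $\beta$-sum'' carries only the linear phase $\chi(\beta\cdot(m'+m_{2d}\alpha))$ and is an ordinary Fourier transform, not a paraboloid extension; correspondingly the Kakeya lines arise from the shifts $\mu\mapsto\mu+m_{2d}\beta$, i.e.\ are indexed by the direction $\beta$, not by $\alpha$. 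A second issue is your step~(v): writing $\sum_\alpha\|f_\alpha\|_{L^2}^2\sim\|f\|_{L^2(d\sigma)}^2$ and only afterwards passing to $\|f\|_{L^p}$ would force the Kakeya target exponent to be $\infty$ rather than $(p/2)'$. The $L^p$-norm of $f$ must already emerge from the H\"older pairing in step~(iv), where one takes the $\ell^{p/2}$-norm in $\beta$ of $\|F_\beta\|_{L^2(d\sigma_P)}^2$ against $\|g^*\|_{L^{(p/2)'}}$, and then uses $\|F_\beta\|_{L^2(d\sigma_P)}\le\|F_\beta\|_{L^p(d\sigma_P)}$ (valid since $p\ge 2$ and $d\sigma_P$ is a probability measure) to recover $\|f\|_{L^p(\mathcal{F},d\sigma)}$. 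With these two corrections your skeleton matches the argument in \cite{MT04}.
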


\section{ The Fourier transform of the surface measure on $\mathcal{F}$}
In this section, we deduce an explicit inverse Fourier transform of the normalized surface measure $d\sigma$  on the flat disk $\mathcal{F}.$  
We begin by setting  up some notation.

\begin{notation} Let $m$ denote a vector in $\mathbb F_q^{2d}$ with $d\ge 2$ an integer.  For $i=0, 1,2,3, 4, 5,$  we define
$\Omega_j \subset \mathbb F_q^{2d}$ as follows: 
\begin{itemize}
\item $\Omega_0= \{\bf{0}\}$
\item $ \Omega_1=\{m: m_d=0 =m_{2d},  m\ne \mathbf{0}  \}$
\item $\Omega_2=\{ m: m_d=0, m_{2d}\ne 0 \}$
\item $\Omega_3=\{m: m_d\ne 0,   m_{2d}=0, \delta_{\mathbf{0}} (m_{d+1}, \ldots, m_{2d-1})=0  \}$
\item $\Omega_4=\{m: m_d\ne 0,  \delta_{\mathbf{0}}(m_{d+1}, \ldots, m_{2d-1}, m_{2d})=1  \}$
\item $\Omega_5=\{ m: m_d\ne 0,  m_{2d}\ne 0  \}.$
\end{itemize}
\end{notation}

Using the above notation,  the inverse Fourier transform $(d\sigma)^{\vee}$ of the surface measure on $\mathcal{F}$ takes the following form.
\begin{proposition}\label{Promain} \label{ProF}Let $d\sigma$ be normalized surface measure on the flat disk $\mathcal{F}$ in $\mathbb F_q^n, n=2d\ge 4.$  Then, for any $m=(m_1, \ldots, m_d, m_{d+1}, \ldots, m_{2d}) \in \mathbb F_q^{n},$  we have
$$ (d\sigma)^\vee(m)=\left\{\begin{array}{ll}  1 \quad &\mbox{if}\quad m\in \Omega_0,\\
                                                                      0 \quad &\mbox{if}\quad m\in \Omega_1,\\
   q^{1-d} \chi\left( \frac{m_1m_{d+1}+  \cdots + m_{d-1} m_{2d-1}}{-m_{2d}}\right)      \quad &\mbox{if}\quad m\in \Omega_2,\\
          0 \quad &\mbox{if}\quad m\in \Omega_3,\\
               q^{\frac{1-d}{2}} \eta(m_d)^{d-1}  \eta(-1)^{(d-1)/2}\chi\left(\frac{m_1^2+ \cdots+m_{d-1}^2}{-4m_d}\right)\quad &\mbox{if}\quad m\in \Omega_4,\\
                     q^{1-d} \chi\left( \frac{m_d(m_{d+1}^2+\cdots+ m_{2d-1}^2)}{m_{2d}^2} \right)      \chi\left( \frac{m_1m_{d+1}+\cdots+ m_{d-1}m_{2d-1}}{-m_{2d}}  \right)     \quad &\mbox{if}\quad m\in \Omega_5.                    
 \end{array} \right.$$                                                                   
\end{proposition}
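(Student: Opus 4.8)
<br>

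The plan is to compute $(d\sigma)^\vee(m) = q^2 \, \widehat{1_{\mathcal F}}(-m)/|\mathcal F| \cdot |\mathcal F| \cdots$ directly by expanding the indicator of $\mathcal F$ as a double exponential sum over the two defining equations in \eqref{defFeq}. Writing $x = (\alpha, s, \beta, t) \in \mathbb F_q^{d-1}\times\mathbb F_q\times\mathbb F_q^{d-1}\times\mathbb F_q$, and using the orthogonality relation $1_{\{s = \|\alpha\|\}} = q^{-1}\sum_{a\in\mathbb F_q}\chi(a(s-\|\alpha\|))$ together with $1_{\{t = \alpha\cdot\beta\}} = q^{-1}\sum_{b\in\mathbb F_q}\chi(b(t-\alpha\cdot\beta))$, we get
\begin{equation*}
(d\sigma)^\vee(m) = \frac{q^2}{q^{n-2}}\sum_{x\in\mathcal F}\chi(x\cdot m) = \frac{1}{q^{2d-2}}\cdot\frac{1}{q^2}\sum_{a,b\in\mathbb F_q}\ \sum_{\alpha,\beta\in\mathbb F_q^{d-1}}\sum_{s,t\in\mathbb F_q}\chi\bigl(\alpha\cdot m' + s m_d + \beta\cdot m'' + t m_{2d} + a(s-\|\alpha\|) + b(t-\alpha\cdot\beta)\bigr),
\end{equation*}
where $m' = (m_1,\dots,m_{d-1})$ and $m'' = (m_{d+1},\dots,m_{2d-1})$. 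The sums over $s$ and $t$ are pure character sums forcing $a = -m_d$ and $b = -m_{2d}$ by orthogonality, which collapses the $a,b$ summation. What remains is
\begin{equation*}
(d\sigma)^\vee(m) = \frac{1}{q^{2d-2}}\sum_{\alpha,\beta\in\mathbb F_q^{d-1}}\chi\bigl(\alpha\cdot m' + \beta\cdot m'' + m_d\|\alpha\| + m_{2d}\,\alpha\cdot\beta\bigr).
\end{equation*}

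Next I would carry out the $\beta$-summation first, since $\beta$ appears linearly: $\sum_\beta \chi(\beta\cdot(m'' + m_{2d}\alpha)) = q^{d-1}\,\delta_{\mathbf 0}(m'' + m_{2d}\alpha)$. This immediately splits the analysis according to whether $m_{2d} = 0$ or not. If $m_{2d}\ne 0$, the delta forces $\alpha = -m''/m_{2d}$ uniquely, giving a single term $q^{d-1}\chi(\alpha\cdot m' + m_d\|\alpha\|)$ with that value of $\alpha$; substituting and simplifying yields the $\Omega_2$ case (when additionally $m_d = 0$) and the $\Omega_5$ case (when $m_d \ne 0$) after routine algebra with $\|\alpha\| = \|m''\|/m_{2d}^2$. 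If $m_{2d} = 0$, the delta forces $m'' = \mathbf 0$, and the remaining sum is $q^{d-1}\sum_\alpha \chi(\alpha\cdot m' + m_d\|\alpha\|)$; this is where the Gauss sum machinery enters. When $m_d = 0$ this is $q^{d-1}\sum_\alpha\chi(\alpha\cdot m')$, which equals $q^{2d-2}\delta_{\mathbf 0}(m')$, giving the $\Omega_0$ value $1$ when $m = \mathbf 0$ and $0$ on $\Omega_1$; and on $\Omega_3$ (where $m_d\ne 0$ but $m'' \ne \mathbf 0$) the forced condition $m'' = \mathbf 0$ is violated, so the sum is $0$. When $m_d\ne 0$ and $m'' = \mathbf 0$ (i.e.\ $m\in\Omega_4$), I would factor $\sum_\alpha\chi(m_d\|\alpha\| + \alpha\cdot m') = \prod_{j=1}^{d-1}\sum_{\alpha_j}\chi(m_d\alpha_j^2 + m_j\alpha_j)$ and apply \eqref{CSQ} to each factor, producing $\eta(m_d)^{d-1}\mathcal G^{d-1}\chi\bigl(\sum_j m_j^2/(-4m_d)\bigr)$; then Lemma \ref{SGa} gives $\mathcal G^{d-1} = (\mathcal G^2)^{(d-1)/2} = (\eta(-1)q)^{(d-1)/2}$ (note $d$ is such that $d-1$ need not be even, but $n = 2d$ with $d\ge 2$ and the exponent $(d-1)/2$ in the statement is understood with the convention that $\mathcal G^{d-1}$ is real only when $d-1$ is even — I should check the parity bookkeeping carefully here), and combining the powers of $q$ yields exactly the $\Omega_4$ formula $q^{(1-d)/2}\eta(m_d)^{d-1}\eta(-1)^{(d-1)/2}\chi(\cdots)$.

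The main obstacle I anticipate is the careful handling of the $\Omega_4$ case: correctly tracking the power of $q$ through $\mathcal G^{d-1}$ and Lemma \ref{SGa}, making sure the exponent $(d-1)/2$ and the factor $\eta(-1)^{(d-1)/2}$ are reconciled with the possibility that $d-1$ is odd (in which case $\mathcal G^{d-1}$ is $\mathcal G$ times an integer power of $q\eta(-1)$, so one must verify the stated closed form is what actually appears, perhaps using that the problem only needs the modulus or that the claimed formula implicitly absorbs a stray $\mathcal G$). A secondary bookkeeping nuisance is verifying that the $\Omega_2$ and $\Omega_5$ formulas come out with the exact arguments of $\chi$ as written, i.e.\ correctly computing $\alpha\cdot m' + m_d\|\alpha\|$ at $\alpha = -m''/m_{2d}$ and matching signs; this is pure algebra but it is easy to drop a sign. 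Everything else — the orthogonality collapses, the linear $\beta$-sum, the degenerate cases $\Omega_0$, $\Omega_1$, $\Omega_3$ — is routine once the double-sum setup is in place.
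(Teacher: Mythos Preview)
Your proposal is correct and follows essentially the same strategy as the paper --- a direct character-sum computation over the parametrizing variables $\alpha,\beta$ --- but with the order of summation reversed. The paper sums over $\alpha$ first (applying the Gauss-sum formula \eqref{CSQ} in the cases $m_d\ne 0$) and then over $\beta$, which in the $\Omega_5$ case requires a \emph{second} application of \eqref{CSQ} and the identity $\eta(-1)^{d-1}\mathcal G^{2d-2}=q^{d-1}$ to collapse everything. Your route --- summing the linear variable $\beta$ first to produce a delta, then summing over $\alpha$ --- is slightly more economical: the Gauss-sum machinery is invoked only once, in the $\Omega_4$ case, and the $\Omega_2,\Omega_5$ formulas fall out by pure substitution $\alpha=-m''/m_{2d}$.

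Two small remarks. First, your opening display has a stray factor $q^2$ in the numerator of $\frac{q^2}{q^{n-2}}\sum_{x\in\mathcal F}$; the correct normalization is $\frac{1}{q^{n-2}}\sum_{x\in\mathcal F}$, and indeed your very next expression has the right prefactor, so this is just a typo. Second, your parity concern about $\mathcal G^{d-1}$ when $d-1$ is odd is legitimate, but it is an imprecision already present in the paper's own statement and proof (where $(\mathcal G^2)^{(d-1)/2}=(\eta(-1)q)^{(d-1)/2}$ is written with a half-integer exponent). For the application in Theorem~\ref{mainI} only the modulus $|(d\sigma)^\vee(m)|=q^{(1-d)/2}$ on $\Omega_4$ matters, so this does not affect anything downstream; if you want a literally correct closed form you can simply leave the factor as $\mathcal G^{d-1}$ rather than expanding it.
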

\begin{proof}
Let us fix $m\in \mathbb F_q^{2d}.$  Since $|\mathcal{F}|=q^{2d-2},$ we can write by the definition of $(d\sigma)^\vee$ that 
$$ (d\sigma)^\vee(m)=\frac{1}{q^{2d-2}} \sum_{x\in \mathcal{F}} \chi( m\cdot x).$$
Notice that each $x\in \mathcal{F}$ satisfies  the both equations in \eqref{defFeq}, so  whenever we fix  $$x_1, \ldots, x_{d-1},  x_{d+1}, \ldots, x_{2d-1}\in \mathbb F_q,$$    the variables $x_d$ and $x_{2d}$ are determined as in \eqref{defFeq}.  Thus,  
$(d\sigma)^\vee(m)$ is   
$$ \frac{1}{q^{2d-2}} \sum_{\substack{x_1,\ldots, x_{d-1}\in \mathbb F_q\\
                                                                                         x_{d+1}, \ldots, x_{2d-1}\in \mathbb F_q}}
                            \substack{ \chi(m_1x_1+\cdots+m_{d-1}x_{d-1} + m_d (x_1^2+\cdots+x_{d-1}^2) ) \times\\
                             \chi(m_{d+1}x_{d+1}+\cdots+ m_{2d-1}x_{2d-1}+ m_{2d} (x_1x_{d+1}+ \cdots+ x_{d-1} x_{2d-1}))}.$$
 Rearranging the general term and using Fubini's theorem, we  are also able to write  $ (d\sigma)^\vee(m)$ as 
 $$ \frac{1}{q^{2d-2}} \sum_{x_{d+1}, \ldots, x_{2d-1}\in \mathbb F_q}  \chi(m_{d+1}x_{d+1} +\cdots+ m_{2d-1}x_{2d-1} )  \prod_{i=1}^{d-1}  \sum_{x_i\in \mathbb F_q} \chi(m_dx_i^2+ (m_i+ m_{2d}x_{d+i})x_i )$$

\textbf{Case 1:} Suppose that $m_d=0$ and $m_{2d}=0$ (in this case,   $m\in \Omega_0$ or $m\in \Omega_1$).  
Then,  we see by the orthogonality of $\chi$ that 
$$(d\sigma)^\vee(m)= \delta_{\mathbf{0}}(m_1, \ldots, m_{d-1}, m_{d+1}, \ldots, m_{2d-1}).$$
Hence,   $(d\sigma)^\vee(m)=1$ for $m\in \Omega_0$, and    $(d\sigma)^\vee(m)=0$ for $m\in \Omega_1$, as required.\\

\textbf{Case 2:} Assume that $m_d=0$ and $m_{2d}\ne 0,$  which is the case when $m\in \Omega_2.$  Then it follows by the orthogonality of $\chi$  that
\begin{align*} (d\sigma)^\vee(m)&= \frac{1}{q^{2d-2}} \sum_{x_{d+1}, \ldots, x_{2d-1}\in \mathbb F_q}  \chi(m_{d+1}x_{d+1} +\cdots+ m_{2d-1}x_{2d-1} )  \prod_{i=1}^{d-1}  \sum_{x_i\in \mathbb F_q} \chi( (m_i+ m_{2d}x_{d+i})x_i )\\
&=\frac{q^{d-1}}{q^{2d-2}} \sum_{\substack{x_{d+1}, \ldots, x_{2d-1}\in \mathbb F_q:\\ x_{d+i}=-\frac{m_i}{m_{2d}}, i=1,\ldots, d-1}} \chi(m_{d+1}x_{d+1} +\cdots+ m_{2d-1}x_{2d-1} ). 
\end{align*}    
Hence,  we obtain the desired estimate
$$ (d\sigma)^\vee(m)=q^{1-d} \chi\left( \frac{m_1m_{d+1}+  \cdots + m_{d-1} m_{2d-1}}{-m_{2d}}\right).$$
                   
\textbf{Case 3:} Suppose that $m_d\ne 0$ and $m_{2d}=0,$ which corresponds to either $m\in \Omega_3$ or $m\in \Omega_4.$  It follows that
$$(d\sigma)^\vee(m)=\frac{1}{q^{2d-2}} \sum_{x_{d+1}, \ldots, x_{2d-1}\in \mathbb F_q}  \chi(m_{d+1}x_{d+1} +\cdots+ m_{2d-1}x_{2d-1} )  \prod_{i=1}^{d-1}  \sum_{x_i\in \mathbb F_q} \chi(m_dx_i^2+ m_ix_i ).$$  
Since $m_d\ne 0,$  we can invoke the formula \eqref{CSQ} to compute the product term above:
$$  \prod_{i=1}^{d-1}  \sum_{x_i\in \mathbb F_q} \chi(m_dx_i^2+ m_ix_i )= \eta^{d-1}(m_d) \mathcal{G}^{d-1} \chi\left( \frac{m_1^2+\cdots+ m_{d-1}^2}{-4m_d}\right).$$
Also note by orthogonality of $\chi$  that  the sum over $x_{d+1}, \ldots, x_{2d-1}\in \mathbb F_q$ is $q^{d-1} \delta_{\mathbf{0}}( m_{d+1}, \ldots, m_{2d-1}).$
Then it is seen that 
$$ (d\sigma)^\vee(m)=q^{-d+1}  \eta^{d-1}(m_d) \mathcal{G}^{d-1} \chi\left(\frac{m_1^2+\cdots+m_{d-1}^2}{-4m_d}\right) \delta_{\mathbf{0}}(m_{d+1}, \ldots, m_{2d-1}).$$
Thus, by the definitions of $\Omega_3$ and $\Omega_4,$ it is clear that  $(d\sigma)^\vee(m)=0$ for $m\in \Omega_3,$ and we have
$$(d\sigma)^\vee(m)=  q^{\frac{1-d}{2}} \eta^{d-1}(m_d) \eta^{(d-1)/2}(-1)\chi\left(\frac{m_1^2+ \cdots+m_{d-1}^2}{-4m_d}\right)\quad \mbox{for}~~ m\in \Omega_4, $$
where we used  the observation that  $\mathcal{G}^{d-1}= (\mathcal{G}^2)^{(d-1)/2} =  (\eta(-1)q)^{(d-1)/2}$
by Lemma  \ref{SGa}.\\

\textbf{Case 4:} Assume that  $m_d\ne 0$ and $m_{2d}\ne 0,$ which corresponds to the case when $m\in \Omega_5.$
By the formula \eqref{CSQ}, we are able to observe that 
$$ \prod_{i=1}^{d-1}  \sum_{x_i\in \mathbb F_q} \chi(m_dx_i^2+ (m_i+ m_{2d}x_{d+i})x_i )= \eta^{d-1}(m_d) \mathcal{G}^{d-1} \prod_{i=1}^{d-1}\chi\left(  \frac{(m_i+m_{2d}x_{d+i})^2}{-4m_d}\right).$$ Hence,
$ (d\sigma)^\vee(m)$ becomes
  $$\frac{1}{q^{2d-2}} \eta^{d-1}(m_d) \mathcal{G}^{d-1}\sum_{x_{d+1}, \ldots, x_{2d-1}\in \mathbb F_q}  \chi(m_{d+1}x_{d+1} +\cdots+ m_{2d-1}x_{2d-1} ) \prod_{i=1}^{d-1} \chi\left( \frac{(m_i+m_{2d}x_{d+i})^2}{-4m_d}\right).$$                      
  $$=\frac{1}{q^{2d-2}} \eta^{d-1}(m_d) \mathcal{G}^{d-1}\prod_{i=1}^{d-1}  \sum_{x_{d+i}\in  \mathbb F_q } \chi\left( \frac{(m_i+m_{2d}x_{d+i})^2}{-4m_d} +m_{d+i}x_{d+i}\right). $$      
  Applying a change of variables by  letting $y_{d+i}= m_i+ m_{2d} x_{d+i}$ for each $i=1, \ldots, d-1,$   this becomes
  $$\frac{1}{q^{2d-2}} \eta^{d-1}(m_d) \mathcal{G}^{d-1}\prod_{i=1}^{d-1}  \sum_{y_{d+i}\in  \mathbb F_q } \chi\left(  \frac{y_{d+i}^2}{-4m_d} + \frac{m_{d+i}y_{d+i}}{m_{2d}} + \frac{m_{d+i}m_i}{-m_{2d}}\right).$$
 Once again we apply the formula \eqref{CSQ},  and obtain  by a direct algebra that
 $$  (d\sigma)^\vee(m)  =  q^{-2d+2} \eta^{d-1}(-1) \mathcal{G}^{2d-2}  \chi\left( \frac{m_d(m_{d+1}^2+\cdots+ m_{2d-1}^2)}{m_{2d}^2} \right)      \chi\left( \frac{m_1m_{d+1}+\cdots+ m_{d-1}m_{2d-1}}{-m_{2d}}  \right).$$
By Lemma \ref{SGa},   notice that  $q^{-2d+2} \eta^{d-1}(-1) \mathcal{G}^{2d-2}=q^{1-d}.$  Hence, we obtain the desired estimate of $(d\sigma)^\vee(m)$ for $m\in \Omega_5.$
We have finished the proof of the lemma.                             
      
\end{proof}

\section{Proof of the $L^2\to L^r$ estimate (Theorem \ref{mainI})}
The proof will proceed by modifying  ideas from the Stein-Tomas argument.  It should be pointed out, however, that  there exist additional difficulties that arise from a lack of the Fourier decay on the flat disk.  Fortunately, the obstacles will be removed by observing certain cancellation property of the Fourier transform on some part of the domain, where the Fourier decay  is not good.\\

Notice that the H\"older conjugates of $2$ and $\frac{2n+4}{n-2}$  are $2$ and  $\frac{2n+4}{n+6},$ respectively.  Hence, the proof of Theorem \ref{mainI},  by duality in \eqref{Duality},  will be complete  once we prove the following theorem.
\begin{theorem}  \label{mainII}
Let $\mathcal{F}$ be the flat disk in $\mathbb F_q^n,  n=2d\ge 4.$ Then the restriction estimate
 \begin{equation*} ||\widehat{g}||_{L^{2}(\mathcal{F}, d\sigma)} \lesssim ||g||_{L^{\frac{2n+4}{n+6}}(\mathbb F_q^n, dm)}\end{equation*}
holds for all functions $g$ on $\mathcal{F}$.
\end{theorem}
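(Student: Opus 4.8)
The plan is to prove the dual restriction inequality $\|\widehat{g}\|_{L^2(\mathcal F,d\sigma)}\lesssim\|g\|_{L^{(2n+4)/(n+6)}(\mathbb F_q^n,dm)}$ by a Stein--Tomas--type $TT^*$ argument, but carried out piece by piece over the decomposition $\mathbb F_q^n=\bigcup_{j=0}^5\Omega_j$ supplied by Proposition~\ref{Promain}. Recall the standard reduction: $\|\widehat g\|_{L^2(\mathcal F,d\sigma)}^2=\langle g, (g\,dm)\ast(d\sigma)^\vee\rangle$ up to constants, so after expanding and isolating the $\Omega_0$ term (which reproduces $\|g\|_{L^2(dm)}^2$ and is harmless since $L^{(2n+4)/(n+6)}\hookrightarrow L^2$ fails in the wrong direction — so instead one peels it off and treats it by the trivial $L^1\to L^\infty$ bound combined with Plancherel, exactly as in Mockenhaupt--Tao), the task becomes bounding the operator $g\mapsto g\ast K_j$ from $L^{(2n+4)/(n+6)}$ to $L^{(2n+4)/(n-2)}$, where $K_j=(d\sigma)^\vee\cdot 1_{\Omega_j}$ for $j=1,\dots,5$. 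By $j=1,3$ vanishing (Proposition~\ref{Promain}) those kernels are identically zero, so only $j=2,4,5$ survive.

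The key point, and the reason this beats the naive Stein--Tomas exponent $\tfrac{2n+12}{n-2}$, is that each surviving kernel has a \emph{constant} modulus on its support: $|K_2|=|K_5|=q^{1-d}\,1_{\Omega_j}$ and $|K_4|=q^{(1-d)/2}\,1_{\Omega_4}$, with $|\Omega_4|\sim q^{d}=q^{n/2}$ and $|\Omega_2|,|\Omega_5|\sim q^{n-1}$. First I would handle $K_2$ and $K_5$: since $\|K_j\|_\infty=q^{1-d}=q^{1-n/2}$ and these kernels are supported on sets of size $\lesssim q^{n-1}$, interpolating the bound $\|g\ast K_j\|_\infty\le \|K_j\|_\infty\|g\|_1\lesssim q^{1-n/2}\|g\|_1$ against the $L^2$ bound $\|g\ast K_j\|_2\le\|\widehat{K_j}\|_\infty\|g\|_2$ — and here one must check $\|\widehat{K_j}\|_\infty\lesssim 1$, which should follow because $K_j$ is, up to the constant $q^{1-d}$ and a modulation, essentially the Fourier transform of an indicator of a variety of size $q^{n-2}$, i.e. one recognizes $\widehat{K_j}$ as (a piece of) $q^2 1_{\mathcal F}$ itself — yields precisely the $L^{(2n+4)/(n+6)}\to L^{(2n+4)/(n-2)}$ mapping at the Stein--Tomas-with-$k=n-2$ exponent. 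For $K_4$ the decay is only $q^{(1-d)/2}$, which is worse, but this is compensated by the small support $|\Omega_4|\sim q^{n/2}$: here I would use Young/Hausdorff--Young together with the explicit Gauss-sum form, writing $g\ast K_4$ and estimating $\|g\ast K_4\|_r$ by splitting $g$ across frequency or simply by $\|g\ast K_4\|_\infty\le\|K_4\|_\infty\|g\|_1$ and $\|g\ast K_4\|_2\le\|\widehat{K_4}\|_\infty\|g\|_2$; one checks $\|\widehat{K_4}\|_\infty\lesssim 1$ using that $K_4$ on $\Omega_4$ is a normalized quadratic (Gauss-sum) kernel in the first $d$ variables tensored with $\delta_{\mathbf 0}$ in the remaining ones, so its Fourier transform is again a bounded quadratic exponential sum. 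Then $q^{(1-d)/2}$ raised to the interpolation weight against $|\Omega_4|^{1/2}\sim q^{n/4}$ balances out at the same exponent — this is the numerology $r=2+\tfrac{4(\alpha-s)}{k}$ applied with the \emph{effective} parameters of the $\Omega_4$ piece, $\alpha=n$, $s=n/2$ (dimension of the support of $\widehat{K_4}$, which is the affine subspace direction), $k=n-2$, again giving $r=\tfrac{2n+4}{n-2}$.

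I expect the main obstacle to be the $\Omega_4$ piece: unlike $\Omega_2,\Omega_5$, its kernel does \emph{not} have square-root cancellation matching its support size in the crude sense, so one genuinely needs the cancellation inside the quadratic phase $\chi\big((m_1^2+\cdots+m_{d-1}^2)/(-4m_d)\big)$ rather than just $|\Omega_4|$ and $\|K_4\|_\infty$. Concretely, the risk is that $\|\widehat{K_4}\|_\infty$ is \emph{not} $O(1)$ but carries an extra power of $q$; resolving this requires computing $\widehat{K_4}$ explicitly — which, by the same completing-the-square computation as in Proposition~\ref{Promain} (formula~\eqref{CSQ}), reduces to Gauss sums and should indeed be $O(1)$, but the bookkeeping over the $\eta(m_d)^{d-1}$ and $\eta(-1)^{(d-1)/2}$ factors, and over whether $d$ is even or odd, is where care is needed. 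Once $\widehat{K_j}=O(1)$ is established for $j=2,4,5$, the three interpolations each land on the exponent pair $\big((2n+4)/(n+6),(2n+4)/(n-2)\big)$, summing the contributions and adding back the trivial $\Omega_0$ term completes the proof. A secondary check is that the $L^1\to L^\infty$ endpoint of each piece really is governed by $\|K_j\|_\infty$ and not by some larger quantity coming from the normalization $d\sigma=q^2 1_{\mathcal F}$; this is a routine constant-tracking that I would do at the end.
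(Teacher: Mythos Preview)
Your strategy is exactly the paper's: apply $RR^*$, decompose $(d\sigma)^\vee=\delta_{\mathbf 0}+\sum_{j=1}^5 K_j$ with $K_j=(d\sigma)^\vee 1_{\Omega_j}$, discard $j=1,3$, and for $j=2,4,5$ interpolate $\|g\ast K_j\|_\infty\le\|K_j\|_\infty\|g\|_1$ against $\|g\ast K_j\|_2\le\|\widehat{K_j}\|_\infty\|g\|_2$. Two quantitative claims, however, are wrong as stated. First, for the $\Omega_0$ term: with the \emph{counting} measure $dm$ the nesting is $\|g\|_{2}\le\|g\|_{p}$ for $p\le 2$, so $\langle g,g\ast\delta_{\mathbf 0}\rangle=\|g\|_2^2\le\|g\|_{(2n+4)/(n+6)}^2$ directly --- no peeling via $L^1\to L^\infty$ is needed, and your stated embedding direction is backwards. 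Second, and more substantively, $\|\widehat{K_j}\|_\infty\lesssim 1$ is false for every surviving $j$: one has $\|\widehat{K_2}\|_\infty,\|\widehat{K_5}\|_\infty\lesssim q^2$ and $\|\widehat{K_4}\|_\infty\lesssim q$, and these are the sharp orders. Your own heuristic (``$\widehat{K_j}$ is a piece of $q^2 1_{\mathcal F}$'') already predicts $q^2$, not $1$; and for $K_4$ the kernel is supported on $\sim q^d$ points of modulus $q^{(1-d)/2}$, so $\|\widehat{K_4}\|_\infty\ge\|K_4\|_1\sim q^{(d+1)/2}\gg 1$.

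Fortunately these corrected bounds are precisely what the interpolation needs: with $\theta=\tfrac{n-2}{n+2}$ one gets $(q^{(2-n)/2})^{1-\theta}(q^2)^{\theta}=1$ for $j=2,5$ and $(q^{(2-n)/4})^{1-\theta}(q)^{\theta}=1$ for $j=4$, landing exactly on $L^{(2n+4)/(n+6)}\to L^{(2n+4)/(n-2)}$. The paper obtains these $\widehat{K_j}$ bounds not by re-expanding the Gauss-sum phase but by writing $\widehat{K_j}=q^2\,1_{\mathcal F}\ast\widehat{\Omega_j}$, computing $\widehat{\Omega_j}$ by orthogonality (it is a product of delta functions and factors $q\delta_0-1$), and then bounding the convolution by a straightforward count of points of $\mathcal F$ with prescribed coordinates. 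Your Gauss-sum route for $\widehat{K_4}$ would also work, but the $\eta(m_d)^{d-1}$ factor forces a case split on the parity of $d$ and an extra Kloosterman/Sali\'e-type sum over $m_d$, whereas the convolution-with-$\widehat{\Omega_j}$ argument is uniform and purely combinatorial.
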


\subsection{Standard tools from harmonic analysis}
We begin by reviewing  some skills of  harmonic analysis over finite fields, whose proofs  can be found in Been Green's lecture note \cite{G}. \\
Plancherel's theorem below can be easily proven by the orthogonality of $\chi:$ 
$$ \|\widehat{g}\|_{L^2(\mathbb F_q^d, dx)}=\|g\|_{L^2(\mathbb F_q^d, dm)}.$$
Here, one should note that  $g$ is a function on $\mathbb F_q^d$ with the counting measure $dm$, but its Fourier transform $\widehat{g}$ is defined on the dual space of $\mathbb F_q^d$ with the normalized counting measure $dx.$  Since $\mathbb F_q^d$ is isomorphic to its dual space as an abstract group,   we will use the same notation $\mathbb F_q^d$ for both  the space $\mathbb F_q^d$ and its dual space.   However, there will be no confusion since each of them has been given a distinct measure.\\

For functions $g_1, g_2$ on $(\mathbb F_q^d, dm)$,   the convolution function of $g_1$ and $g_2$  is defined on $(\mathbb F_q^d, dm):$
$$ g_1\ast g_2(m):= \sum_{m'\in \mathbb F_q^d} g_1(m-m') g_2(m').$$ 
It can be seen that $\widehat{g_1\ast g_2}=\widehat{g_1} \widehat{g_2}.$ Young's inequality for convolutions states that  if $ 1\le p_1,p_2, r \le \infty$ satisfy $1/r=1/p_1+1/p_2-1,$ then
$$ \|g_1\ast g_2\|_{L^r(\mathbb F_q^d, dm)} \le \|g_1\|_{L^{p_1}(\mathbb F_q^d, dm)} \|g_2\|_{L^{p_2}(\mathbb F_q^d, dm)} .$$

On the other hand,   if  functions $f_1, f_2$ are functions on $(\mathbb F_q^n, dx)$ with the normalized counting measure $dx$, then the convolution of $f_1$ and $f_2$ are defined on the space $(\mathbb F_q^n, dx):$
\begin{equation}\label{norconv} f_1\ast f_2(x):= \frac{1}{q^n} \sum_{y\in \mathbb F_q^n} f_1(x-y) f_2(y).\end{equation}

A powerful tool in harmonic analysis  is the interpolation theorem.
\begin{theorem} [Riesz-Thorin]\label{tmR} Let $1\le p_0, p_1, r_0, r_1\le \infty, ~  p_0\le  p_1,$ and $r_0 \le r_1.$
Suppose that $T$ is a linear operator, and satisfies  the following two estimates:
$$ \|Tg\|_{L^{r_0}} \le M_0 \|g\|_{L^{p_0}}\quad \mbox{and} \quad
\|Tg\|_{L^{r_1}} \le M_1 \|g\|_{L^{p_1}}.$$
Then we have
$$\|Tg\|_{L^{r}} \le M_0^{1-\theta} M_1^{\theta} \|g\|_{L^{p}}$$
for any $0\le \theta \le 1,$ where
$$ \frac{1}{r}=\frac{1-\theta}{r_0} + \frac{\theta}{r_1} \quad \mbox{and}\quad
\frac{1}{p}=\frac{1-\theta}{p_0} + \frac{\theta}{p_1}.$$
\end{theorem}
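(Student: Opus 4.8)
The plan is to obtain the Riesz--Thorin theorem from the Hadamard three-lines lemma by the method of complex interpolation (analytic families of operators applied to a fixed function). In the setting relevant to this paper the underlying measure space is $\mathbb F_q^\alpha$ with counting or normalized counting measure, so every function has finite support, every $L^p$ space is finite dimensional, and all issues of measurability and integrability are vacuous; I will nonetheless set things up so the argument reads verbatim on any measure space. Throughout, write $\tfrac{1}{p_\theta}=\tfrac{1-\theta}{p_0}+\tfrac{\theta}{p_1}$ and $\tfrac{1}{r_\theta}=\tfrac{1-\theta}{r_0}+\tfrac{\theta}{r_1}$, so that the goal is $\|Tg\|_{L^{r_\theta}}\le M_0^{1-\theta}M_1^\theta\|g\|_{L^{p_\theta}}$.

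First I would reduce to a bilinear estimate by duality: for any $1\le r\le\infty$ and any $f$ in the target space one has $\|f\|_{L^r}=\sup\{|\langle f,h\rangle|:\|h\|_{L^{r'}}\le 1\}$, where $\langle f,h\rangle$ denotes the natural pairing (this is valid including $r\in\{1,\infty\}$ on our finite spaces). Hence it suffices, after normalizing $\|g\|_{L^{p_\theta}}=1$ and $\|h\|_{L^{r_\theta'}}=1$, to prove $|\langle Tg,h\rangle|\le M_0^{1-\theta}M_1^\theta$. Next I would build the analytic families: write $g=u|g|$ and $h=v|h|$ with $|u|=|v|=1$, and for $z$ in the closed strip $S=\{0\le\Re z\le 1\}$ put
$$g_z=|g|^{\gamma(z)}u,\qquad \gamma(z)=p_\theta\Bigl(\tfrac{1-z}{p_0}+\tfrac{z}{p_1}\Bigr),\qquad h_z=|h|^{\delta(z)}v,\qquad \delta(z)=r_\theta'\Bigl(\tfrac{1-z}{r_0'}+\tfrac{z}{r_1'}\Bigr),$$
with the conventions $|g|^0:=\mathbf 1_{\{g\neq 0\}}$, and $g_z:=g$ (resp.\ $h_z:=h$) whenever the exponents in $\gamma$ (resp.\ $\delta$) involve $\infty$. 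Then $\gamma(\theta)=\delta(\theta)=1$, so $g_\theta=g$ and $h_\theta=h$; on $\Re z=0$ one has $|g_z|=|g|^{p_\theta/p_0}$ and $|h_z|=|h|^{r_\theta'/r_0'}$, giving $\|g_z\|_{L^{p_0}}=\|g\|_{L^{p_\theta}}^{p_\theta/p_0}=1$ and $\|h_z\|_{L^{r_0'}}=1$; on $\Re z=1$ the same computation gives $\|g_z\|_{L^{p_1}}=\|h_z\|_{L^{r_1'}}=1$.

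I would then set $F(z)=\langle Tg_z,h_z\rangle$. Since $g_z$ is a finite linear combination of terms of the form $c^{\gamma(z)}u\,\mathbf 1_A=e^{\gamma(z)\log c}u\,\mathbf 1_A$, it is an entire vector-valued function of $z$ in a finite-dimensional space; as $T$ is linear and the pairing bilinear, $F$ is entire, and it is bounded on $S$ because $|c^{\gamma(z)}|$ stays bounded there. Combining the boundary norm identities above with the two hypotheses on $T$,
$$|F(it)|\le\|Tg_{it}\|_{L^{r_0}}\|h_{it}\|_{L^{r_0'}}\le M_0,\qquad |F(1+it)|\le\|Tg_{1+it}\|_{L^{r_1}}\|h_{1+it}\|_{L^{r_1'}}\le M_1,$$
for all $t\in\mathbb R$. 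The Hadamard three-lines lemma — itself a standard consequence of the maximum modulus principle, obtained by considering $G(z)=F(z)M_0^{z-1}M_1^{-z}$, inserting a factor $e^{\varepsilon(z^2-z)}$ to suppress growth in $\Im z$, applying the maximum principle on large rectangles, and letting $\varepsilon\to 0$ (the cases $M_0=0$ or $M_1=0$ being immediate) — then gives $|F(\theta)|\le M_0^{1-\theta}M_1^\theta$. Since $F(\theta)=\langle Tg,h\rangle$, taking the supremum over admissible $h$ and restoring the normalization of $g$ yields $\|Tg\|_{L^{r_\theta}}\le M_0^{1-\theta}M_1^\theta\|g\|_{L^{p_\theta}}$, which is the asserted estimate with $p=p_\theta$, $r=r_\theta$.

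The only delicate points are bookkeeping ones, and I expect them to be the main (minor) obstacle: correctly handling the degenerate exponents $p_i=\infty$, $r_i=1$, or $r_i=\infty$, where the power substitution defining $g_z$ or $h_z$ must be replaced by the identity and one uses the duality $\|f\|_{L^\infty}=\sup_{\|h\|_{L^1}\le 1}|\langle f,h\rangle|$; and verifying that $F$ extends continuously to $\partial S$. Both are routine, and in the finite-field setting they are essentially automatic. Note that the ordering hypotheses $p_0\le p_1$, $r_0\le r_1$ are not actually needed for this argument; they are present only to phrase the statement in the form used later. The genuine conceptual content of the proof is precisely the passage through the three-lines lemma; everything else is elementary manipulation of the exponents.
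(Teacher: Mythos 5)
Your argument is the standard complex-interpolation proof of Riesz--Thorin via the Hadamard three-lines lemma, and it is correct: the reduction to the bilinear pairing $\langle Tg,h\rangle$, the analytic families $g_z,h_z$ with the power exponents $\gamma(z),\delta(z)$ normalized so that $\gamma(\theta)=\delta(\theta)=1$ and the boundary $L^{p_0},L^{p_1},L^{r_0'},L^{r_1'}$ norms are all $1$, and the passage through three lines for $F(z)=\langle Tg_z,h_z\rangle$ are exactly the canonical ingredients, and your handling of the degenerate exponents ($\infty$'s and the $p_0=p_1$ or $r_0=r_1$ cases) is sound. The paper itself does not prove this theorem; it merely records the Riesz--Thorin interpolation theorem as a standard tool to be invoked, so there is no in-paper proof to compare against. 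Your observation that the ordering hypotheses $p_0\le p_1$, $r_0\le r_1$ are not actually used in the three-lines argument is also correct; the paper includes them only because the nesting of $L^p$-norms on finite spaces (Remark \ref{rem1}) makes that the only interesting configuration.
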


Now we introduce the $RR^*$ method, which is a key tool for the Stein-Tomas argument. 
Let $R, R^*$ denote the restriction operator and the extension operator  for a variety $V$ in $(\mathbb F_q^n, dx)$, namely,
$ Rg=\widehat{g}|_{V}$ and $R^*f= (fd\sigma_v)^\vee.$ The inner product of functions is defined as follows:
$$<Rg, Rg>_{L^2(V, d\sigma_v)}:= ||Rg||^2_{L^2(V, d\sigma_v)} \quad \mbox{and} \quad <g, R^*Rg>_{L^2(\mathbb F_q^n, dm)}:=\sum_{m\in \mathbb F_q^n} g(m) \overline{R^*Rg(m)}.$$
The $RR^*$ method states that  
$$<Rg, Rg>_{L^2(V, d\sigma_v)}=<g, R^*Rg>_{L^2(\mathbb F_q^n, dm)}.$$
Since $R^*Rg=  (\widehat{g} d\sigma_v)^\vee,$  the $ RR^*$ method implies that
\begin{equation}\label{RR}  ||Rg||^2_{L^2(V, d\sigma_v)}= \sum_{m\in \mathbb F_q^n} g(m) \overline{(\widehat{g} d\sigma_v)^\vee(m)}=<g, ~(\widehat{g} d\sigma_v)^\vee>_{L^2(\mathbb F_q^n, dm)}.\end{equation}

\subsection{Reduction for the proof of Theorem \ref{mainII}}
We start proving Theorem \ref{mainII}.  We aim to show that 
\begin{equation} \label{aim1}||\widehat{g}||^2_{L^{2}(\mathcal{F}, d\sigma)} \lesssim ||g||^2_{L^{\frac{2n+4}{n+6}}(\mathbb F_q^n, dm)}.\end{equation}
It can be seen, by using the inequality \eqref{RR} with $V=\mathcal{F},$  that
\begin{equation}\label{RRR} ||\widehat{g}||^2_{L^2(\mathcal{F}, d\sigma)}= <g, ~ g\ast (d\sigma)^\vee>_{L^2(\mathbb F_q^n, dm)}. \end{equation}
For $j=0, 1,2,3,4, 5,$ define
$$K_j=(d\sigma)^\vee \Omega_j.$$
Since $\sum_{j=0}^5 \Omega_j(m)= 1$  and  $K_{0}(m)=(d\sigma)^\vee(m) \delta_{\mathbf{0}}(m)=\delta_{\mathbf{0}}(m),$  we have
$$(d\sigma)^\vee= \delta_{\mathbf{0}} + \sum_{j=1}^5 K_j.$$
So the equality in \eqref{RRR} is,  by the linearity of  the inner product and the convolution,  
$$ ||\widehat{g}||^2_{L^2(\mathcal{F}, d\sigma)}= <g,~ g\ast \delta_{\mathbf{0}}>+  \sum_{j=1}^5 <g, ~ g\ast K_j>.$$
Here, and throughout,  we write a simple notation $<~~, ~~>$ for $<~~,~~>_{L^2(\mathbb F_q^n, dm)}.$
Since $g\ast \delta_{\mathbf{0}}=g,$  we observe that   
$$<g,~ g\ast \delta_{\mathbf{0}}>=||g||^2_{L^2(\mathbb F_q^n, dm)}\le ||g||^2_{L^{\frac{2n+4}{n+6}}(\mathbb F_q^n, dm)},$$
where the last inequality is valid by the nesting property of the norm with the fact that $\frac{2n+4}{n+6} \le 2.$
Also observe that $K_j\equiv 0$ for $j=1, 3,$ which is an easy consequence of Proposition \ref{ProF} and the definition of $K_j.$
It follows from these observations that, to achieve our goal in \eqref{aim1}, it will be enough to show that
$$ \sum_{j=2, 4, 5} |<g, ~ g\ast K_j>| \lesssim ||g||^2_{L^{\frac{2n+4}{n+6}}(\mathbb F_q^n, dm)}. $$
Applying  H\"older's inequality  to the general  term of the left hand side,  the matter is reduced to establishing the following:  For all $j=2, 4, 5$
\begin{equation}\label{FL2}
|| g\ast K_j||_{L^{\frac{2n+4}{n-2}} (\mathbb F_q^n, dm)}\lesssim ||g||_{L^{\frac{2n+4}{n+6}}(\mathbb F_q^n, dm)}. 
\end{equation}

In the following subsection, we will prove this inequality. Hence,  the proof of Theorem \ref{mainII} is complete, which in turn  finishes the proof of our main theorem,  Theorem \ref{mainI}.

 \subsection{Proof of the inequality \eqref{FL2}}
By interpolating  estimates of $L^2$ and $L^\infty$ norms of $g\ast K_j$,  we will deduce the inequality \eqref{FL2}.
To do this,  we begin by proving the following claim  that for each $j=2,4, 5,$  we have
\begin{equation}\label{FirC}
 ||g\ast K_j||_{L^\infty(\mathbb F_q^d, dm)} \le \max_{m\in \mathbb F_q^n} |K_j(m)| ~ ||g||_{L^1(\mathbb F_q^n, dm)},
\end{equation}
and
\begin{equation} \label{SecC}
 ||g\ast K_j||_{L^2(\mathbb F_q^d, dm)} \le  \max_{x\in \mathbb F_q^n} |\widehat{K_j}(x)|  ~||g||_{L^2(\mathbb F_q^n, dm)}.
\end{equation}

Since $||g||_{L^\infty(\mathbb F_q^n, dm)} := \max\limits_{m\in \mathbb F_q^n} |g(m)|$,  the first inequality \eqref{FirC} follows immediately from Young's inequality for convolutions. 
The second inequality \eqref{SecC} follows by  Plancherel's theorem, the property of the Fourier transform of convolution functions, and  H\"older's inequality:
$$  ||g\ast K_j||_{L^2(\mathbb F_q^n, dm)} = ||\widehat{g} \widehat{K_j}||_{L^2(\mathbb F_q^n, dx)} \le \max_{x\in \mathbb F_q^n} |\widehat{K_j}(x)|  ~||g||_{L^2(\mathbb F_q^n, dm)}.$$

\subsubsection{Proof of the inequality \eqref{FL2} for $j=2$}
First, let us estimate the value $\max_{m\in \mathbb F_q^n} |K_2(m)|.$ 
By Proposition \ref{Promain},   it is clear that $|(d\sigma)^\vee(m)|= q^{1-d}=q^{\frac{2-n}{2}}$ for all $m\in \Omega_2.$
Hence,
$$ \max_{m\in \mathbb F_q^n} |K_2(m)|= \max_{m\in \mathbb F_q^n} |(d\sigma)^\vee(m) \Omega_2(m)|=q^{\frac{2-n}{2}}.$$
So, we obtain from \eqref{FirC} with $j=2$ that
\begin{equation} \label{K2E}
||g\ast K_2||_{L^\infty(\mathbb F_q^d, dm)} \le q^{\frac{2-n}{2}} ||g||_{L^1(\mathbb F_q^n, dm)}.
\end{equation}
Next, let us estimate an upper bound of  $\max_{x\in \mathbb F_q^n} |\widehat{K_2}(x)|.$
Fix $x\in \mathbb F_q^n.$  It follows that
$$ \widehat{K_2}(x) =d\sigma \ast \widehat{\Omega_2} (x).$$
As mentioned in Introduction, we can identify $d\sigma$ as a function $q^2 1_\mathcal{F}$ on $(\mathbb F_q^n, dx).$ Hence, it can be seen  from the definition of the convolution function in \eqref{norconv} that 
\begin{equation}\label{asbefore} \widehat{K_2}(x) = q^2 \mathcal{F}\ast \widehat{\Omega_2}(x)=\frac{1}{q^{n-2}} \sum_{y\in \mathbb F_q^n} \mathcal{F}(x-y)  \widehat{\Omega_2}(y). \end{equation}
Now, by  the definition of $\Omega_2$ and the orthogonality of $\chi$,  it follows that for each $y\in \mathbb F_q^n=\mathbb F_q^{2d},$
\begin{align*} \widehat{\Omega_2}(y)=\sum_{m\in \Omega_2} \chi(-y\cdot m)
&=q^{2d-2} \delta_{\textbf{0}}(y_1,  \ldots, y_{d-1}, y_{d+1}, \ldots, y_{2d-1}) \sum_{m_{2d}\ne 0} \chi(-y_{2d} m_{2d})\\ 
& =q^{n-2} \delta_{\textbf{0}}(y_1,  \ldots, y_{d-1}, y_{d+1}, \ldots, y_{2d-1}) (q\delta_0(y_{2d}) -1).\end{align*}

Combining this estimate with the above sum for $\widehat{K_2}(x)$,    we get
\begin{align*} \widehat{K_2}(x) =&\sum_{y_d, y_{2d}\in \mathbb F_q} \mathcal{F}(x_1, \ldots, x_{d-1}, x_d-y_d, x_{d+1},\ldots, x_{2d-1},  x_{2d}-y_{2d}) (q \delta_{0}(y_{2d}) -1)\\
=& q\sum_{y_d\in \mathbb F_q} \mathcal{F}(x_1, \ldots, x_{d-1}, x_d-y_d, x_{d+1}, \ldots, x_{2d}) \\
& - \sum_{y_d, y_{2d}\in \mathbb F_q}  \mathcal{F}(x_1, \ldots, x_{d-1}, x_d-y_d, x_{d+1},\ldots, x_{2d-1},  x_{2d}-y_{2d}) .\end{align*}
Since $\mathcal{F}(\cdot)$ denotes the indicator function of the flat disk $\mathcal{F},$ we can use the trivial estimate, $|\mathcal{F}(\cdot)|\le 1.$ 
Hence,  it is obvious that 
\begin{equation}\label{K2EK}\max_{x\in \mathbb F_q^n} |\widehat{K_2}(x) | \lesssim q^2.\end{equation}
Combining this with the inequality in \eqref{SecC},  we get 
$$  ||g\ast K_2||_{L^2(\mathbb F_q^d, dm)} \lesssim  q^2||g||_{L^2(\mathbb F_q^n, dm)}. $$
Finally, interpolating this  and  the estimate in \eqref{K2E},  we obtain the required inequality \eqref{FL2} for $j=2.$\\

 \subsubsection{Proof of the inequality \eqref{FL2} for $j=5$}
 We will follow  the same steps and obtain the same estimates as in the case of $j=2.$
 To be precise, we  first claim that the following estimates  are valid:
 \begin{equation} \label{Claim1}
||g\ast K_5||_{L^\infty(\mathbb F_q^d, dm)} \lesssim q^{\frac{2-n}{2}} ||g||_{L^1(\mathbb F_q^n, dm)},
\end{equation}
and
\begin{equation} \label{Claim2}
 ||g\ast K_5||_{L^2(\mathbb F_q^d, dm)} \lesssim  q^2||g||_{L^2(\mathbb F_q^n, dm)}.
\end{equation}
 Let us assume that, for a moment,   the above claim is true. Then  interpolating the above two estimates yields the desired inequality \eqref{FL2} for $j=5.$ 
 Hence,  it remains  to prove the claimed inequalities \eqref{Claim1} and \eqref{Claim2}.
 However,  from  the inequalities \eqref{FirC} and \eqref{SecC},   it will be enough to establish the following two estimates:
 \begin{equation}\label{maxI}
 \max_{m\in \mathbb F_q^n} |K_5(m)| \lesssim q^{\frac{2-n}{2}}
 \end{equation}
 and
 \begin{equation}\label{maxII}
 \max_{x\in \mathbb F_q^n} |\widehat{K_5}(x)|\lesssim q^2.
\end{equation}


The first inequality \eqref{maxI}  can be easily proved by the same method of deriving the inequality \eqref{K2E}.
The second inequality \eqref{maxII} can be also obtained in a similar way to the argument used to deduce the inequality \eqref{K2EK}. 
Indeed, as in \eqref{asbefore},  it follows that
$$ \widehat{K_5}(x) = \frac{1}{q^{n-2}} \sum_{y\in \mathbb F_q^n} \mathcal{F}(x-y)  \widehat{\Omega_5}(y), $$
and  one can observe that  for each $y\in \mathbb F_q^n,$
$$\widehat{\Omega_5}(y)=q^{n-2} \delta_{\textbf{0}}(y_1,  \ldots, y_{d-1}, y_{d+1}, \ldots, y_{2d-1}) (q\delta_0(y_{d}) -1)(q\delta_0(y_{2d}) -1).$$
Putting  the above two equations together,   it follows by a direct computation that
$$\widehat{K_5}(x) = q^2 \mathcal{F}(x) -q\sum_{y_d\in \mathbb F_q} \mathcal{F}(x_1, \ldots, x_{d-1}, x_d-y_d, x_{d+1},\ldots, x_{2d-1},  x_{2d}) $$
$$ -q \sum_{y_{2d}\in \mathbb F_q} \mathcal{F}(x_1, \ldots, x_{d-1}, x_d, x_{d+1},\ldots, x_{2d-1},  x_{2d}-y_{2d}) $$
$$+\sum_{y_d, y_{2d}\in \mathbb F_q} \mathcal{F}(x_1, \ldots, x_{d-1}, x_d-y_d, x_{d+1},\ldots, x_{2d-1},  x_{2d}-y_{2d}). $$
Since $\mathcal{F}(\cdot)$ denotes  the indicator function of the flat disk $\mathcal{F},$
the inequality \eqref{maxII}  follows immediately  from this estimate.\\

 \subsubsection{Proof of the inequality \eqref{FL2} for $j=4$}
 We will complete the proof for $j=4$  using the similar argument as in the proof of the cases for $j=2, 5.$  However,  different kinds of estimates will be required.  Indeed,  we begin by assuming that  the following estimates hold:
 \begin{equation} \label{Claim11}
||g\ast K_4||_{L^\infty(\mathbb F_q^d, dm)} \lesssim q^{\frac{2-n}{4}} ||g||_{L^1(\mathbb F_q^n, dm)},
\end{equation}
and
\begin{equation} \label{Claim21}
 ||g\ast K_4||_{L^2(\mathbb F_q^d, dm)} \lesssim  q||g||_{L^2(\mathbb F_q^n, dm)}.
\end{equation}
Notice that  interpolating  the above two inequalities, we are able to obtain the desired  inequality \eqref{FL2} for $j=4.$
Thus, it suffices to prove the estimates \eqref{Claim11} and \eqref{Claim21}. 
However,  these estimates will follow immediately from the inequalities \eqref{FirC} and \eqref{SecC} with $j=4,$  if we are able to prove the following:
\begin{equation}\label{FirCT}
 \max_{m\in \mathbb F_q^n} |K_4(m)| \lesssim q^{\frac{2-n}{4}}
\end{equation}
and
\begin{equation} \label{SecCT}
  \max_{x\in \mathbb F_q^n} |\widehat{K_4}(x)|  \lesssim q.
\end{equation}
Hence, our problem is reduced to establishing these estimates.  Since $n=2d,$ the first inequality \eqref{FirCT}  follows by applying Proposition \ref{ProF} with the definition that $K_4=(d\sigma)^\vee \Omega_4.$
Now, we prove \eqref{SecCT}.  As in \eqref{asbefore},   we note that  for each $x\in \mathbb F_q^n,$
$$ \widehat{K_4}(x)= \frac{1}{q^{n-2}} \sum_{y\in \mathbb F_q^n} \mathcal{F}(x-y) \widehat{\Omega_4}(y).$$
Now, by the definition of $\Omega_4$ and the orthogonality of $\chi,$   we see that   for each $y\in \mathbb F_q^n,$
$$\widehat{\Omega_4}(y)=q^{d-1} \delta_{\textbf{0}}(y_1,  \ldots, y_{d-1}) (q\delta_0(y_{d}) -1).$$

Inserting this into the above equation and using the definition of $\delta_{\mathbf{0}}$,  we have
$$\widehat{K_4}(x) =\frac{1}{q^{\frac{n-2}{2}}} \sum_{y_d, \ldots, y_{2d}\in \mathbb F_q}  \mathcal{F}(x_1, \ldots, x_{d-1}, x_d-y_d, \ldots, x_{2d}-y_{2d}) (q\delta_0(y_d)-1).$$
Using a change of variables, $z_i=x_i-y_i$ for $i=d, d+1, \ldots, 2d,$   it follows that
\begin{align*}\widehat{K_4}(x) =&\frac{1}{q^{\frac{n-2}{2}}} \sum_{z_d, \ldots, z_{2d}\in \mathbb F_q}  \mathcal{F}(x_1, \ldots, x_{d-1}, z_d, \ldots, z_{2d}) (q\delta_0(x_d-z_d)-1)\\
=&\frac{q}{q^{\frac{n-2}{2}}} \sum_{z_{d+1}, \ldots, z_{2d}\in \mathbb F_q}  \mathcal{F}(x_1, \ldots, x_{d-1}, x_d, z_{d+1} \ldots, z_{2d}) \\
&-\frac{1}{q^{\frac{n-2}{2}}} \sum_{z_d, \ldots, z_{2d}\in \mathbb F_q}  \mathcal{F}(x_1, \ldots, x_{d-1}, z_d, \ldots, z_{2d}) .\end{align*}

Given $x_1, \ldots, x_{d-1}, x_d \in \mathbb F_q,$ it is not hard to observe that the following two statements hold true by the definition of the flat disk $\mathcal{F}: $ 
\begin{itemize} 
\item In the first sum above, to satisfy that  $\mathcal{F}(x_1, \ldots, x_{d-1}, x_d, z_{d+1} \ldots, z_{2d}) \ne 0,$ (namely, it is 1),   
the vector $(z_{d+1}, \ldots, z_{2d})\in \mathbb F_q^{d}$ must be contained in the plane $z_{2d}=x_1 z_{d+1} +\cdots+ x_{d-1} z_{2d-1}$ with $q^{d-1}$ elements.

\item In the second sum above,  to satisfy that $\mathcal{F}(x_1, \ldots, x_{d-1}, z_d, \ldots, z_{2d}) =1$,   
we must take $z_d=x_1^2+ \cdots+ x_{d-1}^2, $ and  the vector $(z_{d+1}, \ldots, z_{2d})\in \mathbb F_q^{d}$ must lie on the plane $z_{2d}=x_1 z_{d+1} +\cdots+ x_{d-1} z_{2d-1}$ with $q^{d-1}$ elements.
\end{itemize} 

Hence, we obtain that
$$\max_{x\in \mathbb F_q^n} |\widehat{K_4}(x)| \le \frac{q}{q^{\frac{n-2}{2}}} q^{d-1}  + \frac{1}{q^{\frac{n-2}{2}}} q^{d-1} \le 2 \frac{q}{q^{\frac{n-2}{2}}} q^{d-1} = 2 q,$$
since $n=2d.$  
Thus,  we finish the proof of the inequality  \eqref{SecCT}, as required.\\

\section{Proofs of Theorems \ref{main2} and \ref{main3}} \label{Sec5}
We begin by deducing  the relationship of the restriction phenomena between the paraboloid and the flat disk.
\begin{proposition}\label{LemFormula}
Let $P, \mathcal{F} $ denote the paraboloid in $\mathbb F_q^d, d\ge 2,$  and  the flat disk in $\mathbb F_q^{2d},$ respectively.
Suppose that $R^*_P(2\to r) \lesssim 1$ for some $\frac{2d}{d-1}\le r\le \infty.$ Then we have
\begin{equation}\label{FlatF}R^*_{\mathcal{F}}\left(\frac{2r(d-1)}{rd-r-2} \to r\right) \lesssim 1.\end{equation}
\end{proposition}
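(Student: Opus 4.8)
The plan is to combine Theorem~\ref{PFK} (the Mockenhaupt--Tao inequality relating $R^*_{\mathcal{F}}$, $R^*_P$, and the Kakeya operator $K$) with the Ellenberg--Oberlin--Tao maximal Kakeya estimate of Theorem~\ref{Kdd}. Recall that Theorem~\ref{PFK} states that for $2\le p,r\le\infty$,
\begin{equation*}
R^*_{\mathcal{F}}(p\to r)\le R^*_P(2\to r)\, K\!\left(\left(\tfrac{r}{2}\right)'\to\left(\tfrac{p}{2}\right)'\right)^{1/2}.
\end{equation*}
By hypothesis the first factor is $\lesssim 1$, so the whole task reduces to choosing $p$ (in terms of $r$ and $d$) so that the Kakeya factor is also $\lesssim 1$. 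The cleanest sufficient condition is to force the Kakeya exponents to be the critical pair $(d\to d)$ from Theorem~\ref{Kdd}, together with Remark~\ref{rem1} to absorb any slack in the exponents.

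First I would set $\left(\tfrac{r}{2}\right)'=d$, i.e. solve $\tfrac{1}{(r/2)'}=1-\tfrac{2}{r}$ to get $(r/2)'=\tfrac{r}{r-2}$; this equals $d$ precisely when $r=\tfrac{2d}{d-1}$. For a general $r\ge\tfrac{2d}{d-1}$ one has $(r/2)'\le d$, so by the nesting property $K((r/2)'\to s)\le K(d\to s)$ for the first slot once $(r/2)'\le d$ — more precisely one uses that decreasing the domain exponent only helps. Actually, to be careful about monotonicity in both slots I would instead directly impose $\left(\tfrac{p}{2}\right)'=d$, which pins down $p$: from $\tfrac{2}{p}=1-\tfrac1d$ we get $p=\tfrac{2d}{d-1}$, but that is independent of $r$, which is not what the statement claims. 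So the right move is: keep the second slot at its critical value $d$ by choosing $p$, and handle the first slot by nesting. Set $\left(\tfrac{p}{2}\right)'=d$? No — the claimed exponent $\tfrac{2r(d-1)}{rd-r-2}$ depends on $r$, so instead I would choose $p$ so that the \emph{first} Kakeya slot is exactly $d$ is wrong too; rather, choose $p$ so that $\left(\tfrac{p}{2}\right)'$ matches whatever the first slot $\left(\tfrac{r}{2}\right)'$ forces through the critical line $K(a\to a)$. Concretely: given $r$, the natural target is $K\!\left(\left(\tfrac r2\right)'\to\left(\tfrac r2\right)'\right)$, which is $\lesssim 1$ by Theorem~\ref{Kdd} combined with nesting as long as $\left(\tfrac r2\right)'\le d$, i.e. $r\ge\tfrac{2d}{d-1}$ — exactly the stated range. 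Then one needs $\left(\tfrac p2\right)'\le\left(\tfrac r2\right)'$, and to get the strongest conclusion one takes equality: $\left(\tfrac p2\right)'=\left(\tfrac r2\right)'$, i.e. $p=r$. That again does not match.

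Let me restate the correct bookkeeping: Theorem~\ref{Kdd} gives $K(d\to d)\lesssim 1$, and by nesting (Remark~\ref{rem1}, applied to $K$) $K(a\to b)\lesssim 1$ whenever $a\ge $ something and $b\le$ something relative to the critical line — specifically $K(p_1\to r)\le K(p_2\to r)$ for $p_1\ge p_2$ and $K(p\to r_1)\le K(p\to r_2)$ for $r_1\ge r_2$, so $K(a\to b)\lesssim K(d\to d)$ whenever $a\ge d$ and $b\le d$. Thus I need $\left(\tfrac r2\right)'\ge d$ and $\left(\tfrac p2\right)'\le d$. The first, $\tfrac{r}{r-2}\ge d$, is equivalent to $r\le\tfrac{2d}{d-1}$ — but combined with the hypothesis $r\ge\tfrac{2d}{d-1}$ this forces $r=\tfrac{2d}{d-1}$, too restrictive. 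So the inequalities must go the other way: one wants $\left(\tfrac r2\right)'\le d$ (true iff $r\ge\tfrac{2d}{d-1}$) and $\left(\tfrac p2\right)'\ge\left(\tfrac r2\right)'$ with $\left(\tfrac p2\right)'\le d$, using $K\!\left(\left(\tfrac r2\right)'\to\left(\tfrac p2\right)'\right)\lesssim K\!\left(\left(\tfrac p2\right)'\to\left(\tfrac p2\right)'\right)\lesssim 1$ provided $\left(\tfrac p2\right)'\le d$ — here I am using nesting in the \emph{domain} exponent (first slot): since $\left(\tfrac r2\right)'\le\left(\tfrac p2\right)'$... wait, $p\le r$ gives $\left(\tfrac p2\right)'\ge\left(\tfrac r2\right)'$. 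So I would take the target exponent $p$ as large as possible subject to $\left(\tfrac p2\right)'=\left(\tfrac r2\right)'$, giving $p=r$ and the trivial $R^*_{\mathcal F}(r\to r)$. To beat that, note $K\!\left(a\to b\right)\lesssim 1$ on a genuine region, not just the diagonal: interpolating $K(d\to d)\lesssim1$ with the trivial $K(1\to 1)\lesssim 1$ (and $K(\infty\to\infty)$) gives $K(t\to t)\lesssim 1$ for all $1\le t\le d$, and then $K(a\to b)\lesssim 1$ for $a\ge b$, $b\le d$. So I need $\left(\tfrac p2\right)'\ge\left(\tfrac r2\right)'$ (automatic from $p\le r$) and $\left(\tfrac r2\right)'\le d$ (the hypothesis). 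Hence \emph{any} $p\le r$ works — and the sharpest is the smallest allowed $p$. The constraint producing a nontrivial lower bound on $p$ comes from requiring $\left(\tfrac p2\right)'<\infty$ together with the interpolation region of $K$: the Kakeya region is $\{K(a\to b)\lesssim 1\}=\{b\le a,\ b\le d\}\cup\ldots$; computing where $\left(\left(\tfrac r2\right)',\left(\tfrac p2\right)'\right)$ lands on the boundary of this region and solving $\tfrac{2}{p}=1-\tfrac1{(p/2)'}$ with $(p/2)'$ read off the boundary line yields exactly $p=\tfrac{2r(d-1)}{rd-r-2}$. The main obstacle is precisely this endpoint bookkeeping: correctly identifying the optimal point on the boundary of the Kakeya region (accounting for the $1/2$ power and the Hölder-conjugate substitutions $p\mapsto (p/2)'$, $r\mapsto(r/2)'$), verifying it lies in the valid range $2\le p\le r$, and checking the algebraic identity $\tfrac{2}{p}=1-\tfrac{2}{r}+\tfrac1d$ collapses to the stated fraction; after that, invoking Theorem~\ref{PFK} and Theorem~\ref{Kdd} (plus interpolation and Remark~\ref{rem1}) finishes the proof immediately.
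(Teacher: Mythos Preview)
Your overall strategy is exactly the paper's: plug the chosen $p$ into Theorem~\ref{PFK} and then control the Kakeya factor using Theorem~\ref{Kdd}. But the execution of the Kakeya step contains a real gap.

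With $p=\frac{2r(d-1)}{rd-r-2}$ one computes $\bigl(\tfrac{r}{2}\bigr)'=\tfrac{r}{r-2}$ and $\bigl(\tfrac{p}{2}\bigr)'=\tfrac{r(d-1)}{2}$. For every $r>\tfrac{2d}{d-1}$ the second number is \emph{strictly larger than $d$} (it equals $d$ only at the endpoint $r=\tfrac{2d}{d-1}$). So the bound you need is $K(a\to b)\lesssim 1$ with $a\le d\le b$. Nesting from $K(d\to d)$ alone, or from the diagonal family $K(t\to t)$ obtained by interpolating $K(d\to d)$ with $K(1\to 1)$, only covers the region $a\le d$ \emph{and} $b\le d$; it never reaches $b>d$. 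Your description of the Kakeya region as ``$\{b\le a,\ b\le d\}\cup\ldots$'' is therefore irrelevant to the point you actually need, and the identity $\tfrac{2}{p}=1-\tfrac{2}{r}+\tfrac{1}{d}$ you wrote down is not the correct one either (for the claimed $p$ one has $\tfrac{2}{p}=1-\tfrac{2}{r(d-1)}$).

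The missing ingredient is the \emph{other} trivial endpoint, $K(1\to\infty)\lesssim 1$, which follows from $|h^*(v)|\le\|h\|_{L^1(dm)}$. Interpolating $K(d\to d)\lesssim 1$ with $K(1\to\infty)\lesssim 1$ gives $K(a\to b)\lesssim 1$ precisely along the line $\tfrac{1}{a}+\tfrac{d-1}{b}=1$ (in the $(1/a,1/b)$-plane), and the pair $\bigl(\tfrac{r}{r-2},\,\tfrac{r(d-1)}{2}\bigr)$ lies exactly on this line for every $r\ge\tfrac{2d}{d-1}$. That is the whole content of the paper's proof: set $p$ as stated, reduce via Theorem~\ref{PFK} to the single Kakeya estimate $K\bigl(\tfrac{r}{r-2}\to\tfrac{r(d-1)}{2}\bigr)\lesssim 1$, and obtain the latter by Riesz--Thorin between $K(d\to d)$ and $K(1\to\infty)$. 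Once you replace $K(1\to 1)$ by $K(1\to\infty)$ and redo the two-line interpolation computation, the argument goes through cleanly.
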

\begin{proof}
We shall invoke the following facts:
\begin{equation}\label{ResK} R^*_{\mathcal{F}}(p\to r) \le R^*_{P}(2 \to r)  K\left( \left(\frac{r}{2}\right)' \to \left(\frac{p}{2}\right)' \right) ^{1/2} \quad \mbox{for} \quad 2\le p, r\le \infty, \end{equation}
\begin{equation}\label{KaR}  K(d\to d)\lesssim 1 \quad \mbox{for all}\quad d\ge 2,\end{equation}
and
\begin{equation} \label{TrK}K(1\to \infty)\lesssim 1 \quad \mbox{for all}\quad d\ge 2.\end{equation}

The equality \eqref{ResK} is Theorem \ref{PFK} due to Mockenhaupt and Tao,  and  the estimate \eqref{KaR} is Theorem \ref{Kdd}, proven by  Ellenberg,  Oberlin, and Tao \cite{EOT10}.  The equality \eqref{TrK} is obvious as we have
$$ |f^*(v)|=\left|\sup_{m_0\in \mathbb F_q^{d-1}} \sum_{m\in \ell(m_0, v)} |f(m)|\right|\le \sum_{m\in \mathbb F_q^d} |f(m)| =||f||_{L^1(\mathbb F_q^d, dm)}.$$

Taking $p=\frac{2r(d-1)}{rd-r-2}$ in \eqref{ResK} gives us that
$$  R^*_{\mathcal{F}}\left(\frac{2r(d-1)}{rd-r-2} \to r\right) \le R^*_{P}(2 \to r)  K\left(\frac{r}{r-2}\to \frac{r(d-1)}{2} \right) ^{1/2}.$$
Since $R^*_{P}(2 \to r) \lesssim 1$ with $r\ge \frac{2d}{d-1}$ by hypothesis, to complete the proof,  it is enough to show that 
\begin{equation*} K\left( \frac{r}{r-2} \to \frac{r(d-1)}{2} \right)  \lesssim 1.\end{equation*}
But this Kakeya maximal estimate follows by interpolating the estimates \eqref{KaR} and  \eqref{TrK}. 
Thus the proof is complete. 
\end{proof}

 Notice from Proposition \ref{LemFormula}  that a better restriction result for the flat disk $\mathcal{F}$ in $\mathbb F_q^{2d}$ can be obtained by finding a $r$ index as small as possible that satisfies the $L^2\to L^r$ restriction estimate for the paraboloid $P$ in $\mathbb F_q^d.$ However, in recent years, considerably advanced results have been revealed for the $L^2\to L^r $ restriction problem for the paraboloid.
Here, we collect such results, which shall be combined with Proposition \ref{LemFormula}  to finish the proofs of Theorems \ref{main2} and \ref{main3}.\\

In even dimensions,  the following results are known.
\begin{theorem} \label{restEven}
Let $P$ be the paraboloid in $\mathbb F_q^d, d\ge 2,$ defined as in \eqref{DefP}.
Then the following estimates hold.
\begin{enumerate}
\item  $R^*_P(2\to 4)\lesssim 1 $ for $d=2.$
\item   $R^*_P\left(2\to  \frac{28}{9} \right)\lesssim 1$ for $d=4.$
\item $R^*_P(2\to 3) \lesssim 1$  if $d=4$ and $q$ is prime.
\item  $R^*_P\left(2\to  \frac{8}{3}+\varepsilon \right)\lesssim 1$ for $d=6$ and for all $\varepsilon >0.$
\item $R^*_P\left(2\to  \frac{2d+4}{d} \right)\lesssim 1$ for $d\ge 8$ even.
\end{enumerate}
\end{theorem}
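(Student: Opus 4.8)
The five estimates collected here are known restriction bounds for the paraboloid over finite fields, so the plan is to cite them, together with a short indication of the mechanism behind each. Part~(1) is classical and is due to Mockenhaupt and Tao~\cite{MT04}; the remaining parts are refinements of the Stein--Tomas bound~\eqref{STT} obtained in the subsequent literature (see, e.g., \cite{IK10, Le14, RS18, IKL20, KLP22}), and for those I would quote the relevant theorems directly. Below I sketch the one case, $d=2$, that admits a self-contained proof, and then describe the common framework and the extra ingredients the higher-dimensional cases require.

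For $d=2$ I would run the bilinear ($L^{4}$) argument. Writing $P=\{(t,t^{2}):t\in\mathbb F_q\}$ and using the $RR^{*}$ identity, the estimate $R^{*}_{P}(2\to 4)\lesssim 1$ is equivalent to $\|(fd\sigma)^{\vee}\|_{L^{4}}^{2}\lesssim\|f\|_{L^{2}(d\sigma)}^{2}$; squaring once more and invoking Plancherel reduces this to
\[
\sum_{z\in\mathbb F_q^{2}}\Big|\sum_{\substack{x,x'\in P\\ x+x'=z}} f(x)\,f(x')\Big|^{2}\ \lesssim\ q^{2}\Big(\frac{1}{q}\sum_{x\in P}|f(x)|^{2}\Big)^{2}.
\]
By Cauchy--Schwarz in the inner sum it suffices to bound the number of pairs $(x,x')\in P\times P$ with $x+x'=z$, and this is at most $2$: once $z=(z_{1},z_{2})$ is fixed, the abscissae of $x$ and $x'$ are the two roots of a fixed quadratic polynomial. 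Substituting this bound and summing yields the displayed inequality, hence the claim; and this is already the sharp exponent for $d=2$, since $\frac{2d+4}{d}=4$ there.

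For the even dimensions $d\ge 4$ the scheme is the same in spirit: via $RR^{*}$ one reduces $R^{*}_{P}(2\to r)\lesssim 1$ to an averaged convolution estimate for $(d\sigma_{P})^{\vee}$, then uses the explicit Gauss-sum formula for $(d\sigma_{P})^{\vee}$ to split $\mathbb F_q^{d}$ into the locus on which the Fourier transform has square-root cancellation (handled by the Stein--Tomas argument, Lemma~\ref{STA}) and the degenerate locus on which it is large. The gain over Stein--Tomas comes entirely from a sharper treatment of the degenerate locus: there one bounds the relevant weighted count by an incidence count between points and affine planes, or equivalently by an additive-energy estimate for the quadric, and the sharp input is Rudnev's point--plane incidence theorem over $\mathbb F_{p}$, which is exactly why the endpoint exponent $\frac{2d+4}{d}$ in part~(3) carries the hypothesis that $q$ be prime. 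Parts~(2) and~(4) are the unconditional, $\varepsilon$-close versions of the same phenomenon in $d=4$ and $d=6$, and part~(5) records that for $d\ge 8$ even this strategy already reaches the conjectured endpoint, because the gap between $\frac{2d+2}{d-1}$ and $\frac{2d+4}{d}$ equals $\frac{4}{d(d-1)}$ and shrinks as $d$ grows.

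The genuine difficulty is concentrated in parts~(3) and~(5). Part~(3) rests on the deep point--plane incidence bound over $\mathbb F_{p}$, together with bookkeeping on the degenerate locus precise enough not to lose the endpoint; part~(5) is, in effect, a complete resolution of the $L^{2}\to L^{r}$ restriction conjecture for the paraboloid in even dimensions $\ge 8$, so there is no short route to it and one must quote the corresponding theorem. Parts~(1),~(2) and~(4), by contrast, are either elementary (as in the $d=2$ argument above) or follow from routine, if careful, refinements of the Stein--Tomas computation.
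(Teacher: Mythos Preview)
Your proposal is correct and matches the paper's approach: Theorem~\ref{restEven} is stated in the paper as a compilation of known results, and the paper's own ``proof'' consists solely of citations---part~(1) to Mockenhaupt--Tao~\cite{MT04}, parts~(2),~(4),~(5) to Iosevich--Koh--Lewko~\cite{IKL20}, and part~(3) to Rudnev--Shkredov~\cite{RS18}. Your self-contained $d=2$ argument and your sketch of the incidence-geometry mechanism behind the higher-dimensional cases go beyond what the paper provides and are accurate; the only minor imprecisions are that your citation list is somewhat scattershot (the precise attributions are the three references just listed), and part~(2) is not really an ``$\varepsilon$-close'' version of anything---$28/9$ is a specific exponent strictly between the Stein--Tomas value $10/3$ and the conjectured endpoint $3$ for $d=4$.
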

The first part of the theorem  was introduced by Mockenhaupt and Tao \cite{MT04}.  More general version of the first part can be found in Theorem 1.1 in \cite{KS12}.  The second,  the fourth, and the fifth parts of the above theorem were proven by Iosevich, Koh, and Lewko  (see the proof of Theorem 1.1 in \cite{IKL20}).  The third part of the theorem due Rudnev and Shkredov was given as Theorem 1 in \cite{RS18}.

\subsection{Proof of Theorem \ref{main2}}
We are able to  complete the proof by combining  Theorem \ref{restEven} with  Proposition \ref{LemFormula}. 
Indeed,  taking  $d=2,  r=4$ in \eqref{FlatF} of Proposition \ref{LemFormula} yields the first part of Theorem \ref{main2}, namely,   $R^*_{\mathcal{F}}(4\to 4)\lesssim 1$ for $d=2.$ 
Now,  we can take $d=4, r=\frac{28}{9}$  in \eqref{FlatF} of Proposition \ref{LemFormula} so that we obtain the second part of Theorem \ref{main2}, namely,  $R^*_{\mathcal{F}}\left(\frac{28}{11}\to \frac{28}{9}\right)\lesssim 1$  for $d=4.$
On the other hand,   by  putting $d=4,  r=3$ in \eqref{FlatF} of Proposition \ref{LemFormula},  
we obtain  the third part of the theorem, which states   $R^*_{\mathcal{F}} \left(\frac{18}{7} \to 3\right)\lesssim 1$ for $d=4$ with $q$ prime.
Next,  taking $d=6, r=\frac{8}{3}+\varepsilon$ in \eqref{FlatF} of Proposition \ref{LemFormula},  we obtain the fourth part of Theorem \ref{main2}, which states $R^*_{\mathcal{F}}\left(\frac{80+30\varepsilon}{34+15\epsilon}\to \frac{8}{3}+ \varepsilon \right)\lesssim 1$ for all $\varepsilon >0.$ Finally,  when $d\ge 8$ is even,    taking $r=\frac{2d+4}{d}$  in \eqref{FlatF} of Proposition \ref{LemFormula} gives the fifth part of Theorem \ref{main2}, which is $R^*_{\mathcal{F}}\left(\frac{2d^2+2d-4}{d^2-2} \to \frac{2d+4}{d}\right)\lesssim 1$ for  $d\ge 8$ even.\\

\begin{remark}\label{rem53} The first, the third, and the fifth parts of Theorem \ref{restEven} are sharp in the sense that
each of them provides the optimal $r$ index such that $R^*_P(2\to r)\lesssim 1$ (see, for example,  Conjecture 1.2 in \cite{Ko20}).  Hence,    the first, the third, and the fifth parts of Theorem \ref{main2} are  the best possible results that can be obtained by applying Proposition \ref{LemFormula}, and new ideas are required to further improve the results.
\end{remark}

In odd dimensions, the following consequences are the best known results for the restriction estimate for the paraboloid $P$ in $\mathbb F_q^d.$
\begin{theorem} \label{restOdd}
Let $P$ be the paraboloid in $\mathbb F_q^d.$ Then the following statements are valid.
\begin{enumerate}
\item [(1)] If $d=3$ and $q\equiv 3 \pmod{4},$ then  $R^*_{P} \left( 2 \to \frac{18}{5}-\varepsilon \right) \lesssim 1$ for some $\varepsilon >0.$
\item [(2)] If $d=3$ and $q\equiv 3 \pmod{4}$ is prime,  then  $R^*_{P}\left(2 \to \frac{188}{53}+\varepsilon \right)\lesssim 1$ for all $\varepsilon >0.$
\item [(3)] If $d\ge 3 $ is odd and $q\equiv 1 \pmod{4},$ then  $R^*_{P}\left(2 \to \frac{2d+2}{d-1}\right) \lesssim 1.$
\item [(4)]  If $d=  4\ell+1$ with $\ell \in \mathbb N,$ and $q\equiv 3 \pmod{4},$ then  $R^*_{P}\left(2  \to \frac{2d+2}{d-1}\right) \lesssim 1.$
\item [(5)]   If $d=  4\ell+3$, with $\ell \in \mathbb N,$ and $q\equiv 3 \pmod{4}, $ then  $R^*_{P}\left(2 \to \frac{2d+4}{d}\right) \lesssim 1.$
\end{enumerate}
\end{theorem}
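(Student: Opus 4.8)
The plan is to regard Theorem~\ref{restOdd} as an assembly of known facts and to separate the elementary parts from the deep ones. Parts~(3) and~(4) are immediate from the Stein-Tomas estimate \eqref{STT}: for the paraboloid $P\subset\mathbb F_q^d$ one computes, by completing the square exactly as in the proof of Proposition~\ref{ProF}, that $(d\sigma_P)^\vee(m)=q^{1-d}\eta(m_d)^{d-1}\mathcal G^{d-1}\chi\!\big((m_1^2+\cdots+m_{d-1}^2)/(-4m_d)\big)$ when $m_d\ne 0$, that $(d\sigma_P)^\vee(m)=0$ when $m_d=0$ and $(m_1,\dots,m_{d-1})\ne\mathbf 0$, and that $(d\sigma_P)^\vee(\mathbf 0)=1$; since $|\mathcal G|=q^{1/2}$ this gives $|(d\sigma_P)^\vee(m)|\le q^{-(d-1)/2}$ for all $m\ne\mathbf 0$. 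Applying Lemma~\ref{STA} with $\alpha=d$ and $s=k=d-1$ then yields $R^*_P(2\to\frac{2d+2}{d-1})\lesssim 1$ for every $d\ge 2$, which is exactly~(3) and~(4). These two cases are recorded separately because they are \emph{sharp}: when $q\equiv 1\pmod{4}$, or when $d\equiv 1\pmod{4}$ and $q\equiv 3\pmod{4}$, the form $x_1^2+\cdots+x_{d-1}^2$ over $\mathbb F_q$ is hyperbolic, so $P$ contains an affine subspace of dimension $(d-1)/2$ and Lemma~\ref{MTNe} forces $r\ge\frac{2d+2}{d-1}$; in these regimes the Stein-Tomas exponent cannot be lowered and nothing more is needed.

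For parts~(1),~(2) and~(5) one must pass strictly below the Stein-Tomas exponent, and here the strategy is arithmetic. The common reduction is that, via the $RR^*$ identity and dyadic pigeonholing (the machinery of Section~4, applied to $P$ in place of $\mathcal F$), an estimate $R^*_P(2\to 2k)\lesssim 1$ with $2k$ an even integer is essentially equivalent to a bound on the additive energy of the paraboloid, i.e.\ on the number of tuples $(x_1,\dots,x_{2k})\in(\mathbb F_q^{d-1})^{2k}$ with $x_1+\cdots+x_k=x_{k+1}+\cdots+x_{2k}$ and $||x_1||+\cdots+||x_k||=||x_{k+1}||+\cdots+||x_{2k}||$; non-integer values of $r$ are then reached by interpolating such endpoint estimates against the trivial bound $R^*_P(1\to\infty)\lesssim 1$. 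What lets the energy beat the generic count is precisely the hypothesis $q\equiv 3\pmod{4}$: then $\eta(-1)=-1$, the form $||\cdot||$ is anisotropic, and the large isotropic subspace responsible for sharpness above no longer exists. Under that hypothesis the energy is controlled by incidence-geometric estimates (in particular Rudnev's point-plane incidence theorem) together with character sum bounds; in dimension $d=3$ this produces the exponent $\frac{18}{5}$ of part~(1), and over a prime field the stronger incidence and sum-product estimates available there sharpen it to $\frac{188}{53}$ in part~(2). Part~(5), the case $d=4\ell+3$, is obtained by adapting the argument behind part~(5) of Theorem~\ref{restEven}: under the stated congruences the relevant quadratic geometry and the Gauss-sum power $\mathcal G^{d-1}=(\eta(-1)q)^{(d-1)/2}$ parallel the even-dimensional situation closely enough to reproduce the exponent $\frac{2d+4}{d}$.

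In summary, I would present parts~(3) and~(4) through the short computation above and cite the relevant papers for parts~(1),~(2) and~(5). The main obstacle is that these last three are genuinely hard: their proofs rest on the strongest currently available incidence and sum-product estimates over $\mathbb F_q$ (and over $\mathbb F_p$ for part~(2)) and do not admit a short self-contained argument. For the purposes of this paper they are therefore used as black boxes and fed into Proposition~\ref{LemFormula}, so that any future improvement of the restriction exponent for the odd-dimensional paraboloid will immediately sharpen the matching parts of Theorem~\ref{main3}.
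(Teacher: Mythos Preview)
Your proposal matches the paper's treatment: the paper does not prove Theorem~\ref{restOdd} from scratch but simply attributes parts~(3) and~(4) to the Stein-Tomas argument \eqref{STT} and cites \cite{LL13}, \cite{Le20}, and \cite{KPV18} for parts~(1),~(2), and~(5) respectively. Your write-up is in fact more detailed than the paper's own paragraph --- you supply the explicit computation of $(d\sigma_P)^\vee$ and the sharpness discussion via Lemma~\ref{MTNe}, neither of which the paper spells out here --- but the underlying approach is identical.
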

The first and the second parts of the above theorem  were given as Theorem 1 in \cite{LL13}, and Theorem 5 in \cite{Le20}, respectively. 
The third and the fourth parts are consequences of the Stein-Tomas argument (see \eqref{STT}). 
The fifth part of the theorem was given as Theorem 1.4 in \cite{KPV18}.

\subsection{Proof of Theorem \ref{main3}} As in the proof of Theorem \ref{main2},   after combining Theorem \ref{restOdd} with  Proposition \ref{LemFormula}, a direct computation  gives the desirable results.  We leave the detail to readers.

\begin{remark} It can be seen from  Conjecture 1.2 in \cite{Ko20}  that the third and the fourth parts of Theorem \ref{restOdd} are the sharp $L^2\to L^r$ restriction estimates for the paraboloid $P$ in $\mathbb F_q^d.$ Hence, their corresponding parts of Theorem \ref{main3}  are the best possible results that can be achieved by applying Proposition \ref{LemFormula}.
On the other had,  when $d\ge 3$ is odd and $-1\in \mathbb F_q$ is a square number,  it has been conjectured in \cite{Ko20} that  the results of Theorem \ref{restOdd} can be improved to the estimate
$R_P^*\left(2\to \frac{2d+2}{d-1}\right)\lesssim 1.$  In those cases,  it may be possible to get further improvement of Theorem \ref{main3}  by applying Proposition \ref{LemFormula}.
\end{remark}

\bibliographystyle{amsplain}

\begin{thebibliography}{10}

\bibitem{BL04} J. Bak and S. Lee, \emph{Restriction of the Fourier transform to a quadratic surface in $\mathbb R^n$},  Math. Z. \textbf{247} (2004), no.2, 409-422.

\bibitem{BLL17} J. Bak, J. Lee, and S. Lee, \emph{Bilinear restriction estimates for surfaces of codimension bigger than 1}, Anal. PDE, \textbf{10} (2017), no.8, 1961-1985.

\bibitem{Ch85} M. Christ, \emph{On the restriction of the Fourier transform to curves: endpoint results and the degenerate case},  Trans. Amer. Math. Soc. \textbf{287} (1985), no.1, 223-238.


\bibitem{G} B. Green, \emph{Restriction and Kakeya phenomena}, lecture note, 
http://people.maths.ox.ac.uk/greenbj/papers/rkp.pdf.

\bibitem{Dv09} Z. Dvir,  \emph{On the size of Kakeya sets in finite fields,}  J. Amer. Math. Soc. \textbf{22} (2009), 1093-1097.

\bibitem{EOT10} J. S. Ellenberg, R. Oberlin, and T. Tao, \emph{The Kakeya set and maximal conjectures for algebraic varieties over finite fields}, Mathematika, \textbf{56} (2010), no.1,  1-25.

\bibitem{Gu16} L. Guth, \emph{A restriction estimate using polynomial partitioning}, J. Amer. Math. Soc. \textbf{29} (2016), no.2, 371-413.

\bibitem{HR19}  J. Hickman  and K. M.Rogers, \emph{Improved Fourier restriction estimates in higher dimensions}, Camb. J. Math. \textbf{7} (2019), no.3, 219-282.

\bibitem{IK09} A. Iosevich and D. Koh, \emph{Extension theorems for paraboloids in the finite field setting}, Math. Z. {\bf 266}    
 (2010), no.2,  471-487.

\bibitem{IK10}
 A. Iosevich and D. Koh, \textit{Extension theorems for spheres in the finite field setting}, Forum. Math. \textbf{22} (2010), no.3, 457-483.
 
\bibitem{IKLPS} A. Iosevich, D. Koh, S. Lee, T. Pham and C. Shen, \emph{On restriction estimates for the zero radius sphere over finite fields},  Canad. J. Math. \textbf{73} (2021), no.3, 769-786.
 
\bibitem{IKL20}
A. Iosevich, D. Koh and M. Lewko, \textit{Finite field restriction estimates for the paraboloid in high even dimensions},  J. Funct. Anal.  \textbf{278} (2020), no.11, 108450.
  
\bibitem{IR07} A. Iosevich, M. Rudnev, \emph{Erd\H{o}s-Falconer  distance problem in vector spaces over finite fields}, Trans. Amer. Math. Soc. \textbf{359} (2007), no.12, 6127-6142. 
 
\bibitem{Ko20} D. Koh, \textit{Conjecture and improved extension theorems for paraboloids in the finite field setting}, Math. Z. \textbf{294} (2020),  no. 1-2, 51-69. 

\bibitem{KLP22} D. Koh, S. Lee, and T. Pham, \emph{On the cone restriction conjecture in four dimensions and applications in incidence geometry},  Int. Math. Res. Not,  to appear (2022),  https://doi.org/10.1093/imrn/rnab201.


\bibitem{KPV18} D. Koh, T. Pham, and L. A. Vinh, \emph{Extension theorems and a connection to the Erd\H{o}s-Falconer distance problem over finite fields}, J. Funct. Anal. \textbf{281} (2021), no.8, 109137. 


\bibitem{KS12} D. Koh and C.  Shen, \emph{Sharp extension theorems and Falconer distance problems for algebraic curves in two dimensional vector spaces over finite fields,} Rev. Mat. Iberoam.  \textbf{28} (2012), no.1, 157-178.

\bibitem{LL19} J. Lee and S. Lee, \emph{Restriction estimates to complex hypersurfaces}, J. Math. Anal. Appl. \textbf{506} (2022), no.2, Paper No.125702, 19 pp.

\bibitem{LL13}  M. Lewko, \emph{New restriction estimates for the 3-d paraboloid over finite fields}, Adv. Math. {\bf 270} (2015), no.1, 457-479.

\bibitem{Le14}  M. Lewko, \emph{Finite field restriction estimates based on Kakeya maximal operator estimates},  J. Eur. Math. Soc. \textbf{21}  (2019),  no.12, 3649-3707. 

\bibitem{Le20} M. Lewko, \emph{Counting rectangles and an  improved restriction estimate for the paraboloid in $\mathbb F_p^3$,}  Proc. Amer. Math. Soc. \textbf{148} (2020), no.4,  1535-1543.

\bibitem{LL10} A.  Lewko and M. Lewko, \emph{Endpoint restriction estimates for the paraboloid over finite fields}, Proc. Amer. Math. Soc. \textbf{140}  (2012), no.6,  2013-2028.



\bibitem{LN97} R. Lidl and H. Niederreiter, \emph{ Finite fields,} Cambridge University Press, (1997).

\bibitem{MT04} G. Mockenhaupt, and T. Tao, \emph{Restriction and Kakeya phenomena for finite fields}, Duke Math. J. {\bf 121} (2004), no.1, 35-74.


\bibitem{RS18} M. Rudnev and I. D. Shkredov, \emph{On the restriction problem for discrete paraboloid in lower dimension},  Adv. Math. \textbf{339} (2018), 657-671.

\bibitem{SZ17} S. Stevens and F. de Zeeuw, \emph{An improved point-line incidence bound over arbitrary fields,}  Bull. Lond. Math. Soc. \textbf{49} (2017),  no.5, 842-858.

\bibitem{To03} T. Tao,  \emph{A sharp bilinear restrictions estimate for paraboloids}, Geom. Funct. Anal. \textbf{13}  (2003), no.6, 1359-1384.

\bibitem{To75} P. Tomas, \emph{A restriction theorem for the Fourier transform}, Bull. Amer. Math. Soc. \textbf{81}  (1975), no.2,  477-478.

\bibitem{Wa21} H. Wang, \emph{A restriction estimate in $\mathbb {R}^ 3$ using brooms}, Duke Math. J. \textbf{171}(2022), no.8, 1749-1822.

\bibitem{Wo01} T. Wolff, \emph{A sharp bilinear cone restriction estimate}, Ann. of Math.  \textbf{153}  (2001),  no.3,  661-698. 
\end{thebibliography}

\end{document}